\definecolor{antiquefuchsia}{rgb}{0.57, 0.36, 0.51}
\definecolor{bittersweet}{rgb}{1.0, 0.44, 0.37}
\definecolor{asparagus}{rgb}{0.53, 0.66, 0.42}
\title{Constructions of normal numbers with infinitely many digits}
\author[Aafko Boonstra] {Aafko Boonstra$^\dagger$}
\author[Charlene Kalle]{Charlene Kalle$^\ddagger$}
\address[$\dagger$]{Department of Mathematics, VU University Amsterdam, De Boelelaan 1111, 1081HV Amsterdam, The Netherlands}
\email[Aafko Boonstra]{a.j.w.boonstra@vu.nl}
\address[$\ddagger$]{Mathematisch Instituut, Leiden University, Niels Bohrweg 1, 2333CA Leiden, The Netherlands}
\email[Charlene Kalle]{kallecccj@math.leidenuniv.nl}
\subjclass[2020]{11K16, 11A63, 05C05}
\keywords{normal numbers, digit frequency, L\"uroth expansions, GLS expansions}
\begin{document}

\newtheorem{prop}{Proposition}[section]
\newtheorem{theorem}{Theorem}[section]
\newtheorem{lemma}{Lemma}[section]
\newtheorem{cor}{Corollary}[section]
\newtheorem{remark}{Remark}[section]
\theoremstyle{definition}
\newtheorem{defn}{Definition}[section]
\newtheorem{ex}{Example}[section]
\tikzstyle{line} = [draw, -latex']

\begin{abstract}
Let $L=(L_d)_{d \in \mathbb N}$ be any ordered probability sequence, i.e., satisfying $0 < L_{d+1} \le L_d$ for each $d \in \mathbb N$ and $\sum_{d \in \mathbb N} L_d =1$. We construct sequences $A = (a_i)_{i \in \mathbb N}$ on the countably infinite alphabet $\mathbb N$ in which each possible block of digits $\alpha_1, \ldots, \alpha_k \in \mathbb N$, $k \in \mathbb N$, occurs with frequency $\prod_{d=1}^k L_{\alpha_d}$. In other words, we construct $L$-normal sequences. These sequences can then be projected to normal numbers in various affine number systems, such as real numbers $x \in [0,1]$ that are normal in GLS number systems that correspond to the sequence $L$ or higher dimensional variants. In particular, this construction provides a family of numbers that have a normal L\"uroth expansion.
\end{abstract}
\maketitle

\section{Introduction}
For any integer $N \ge 2$ each number $x \in [0,1]$ has a base $N$-expansion:
\[ x = \sum_{i \ge 1} \frac{c_i}{N^i}, \quad c_i \in \{0,1, \ldots, N-1\}, \, i \in \mathbb N.\]
The corresponding digit set is the set $\{0,1, \ldots, N-1\}$. A number $x \in [0,1]$ is called {\em normal} in base $N \ge 2$ if for each $k \ge 1$ and any digits $\gamma_1, \ldots, \gamma_k \in \{0,1,2, \ldots, N-1\}$ it holds that
\[ \lim_{n \to \infty} \frac{\# \{ 1 \le i \le n \, : \, c_i = \gamma_1, \ldots, c_{i+k-1}=\gamma_k\}}{n} = \frac1{N^k},\]
i.e., if each block $\gamma_1, \ldots, \gamma_k$ of $k$ digits occurs with frequency $\frac1{N^k}$ in the base $N$-expansion of $x$. It is a consequence of Borel's normal number theorem from 1909 \cite{Bor09} that Lebesgue almost all numbers $x \in [0,1]$ are normal in base $N$ for any integer $N \ge 2$. It is a different matter, however, to find explicit examples of such numbers. One of the most famous examples of a normal number in integer base $N \ge 2$ is Champernowne's constant obtained by concatenating the base $N$ representations of the natural numbers in increasing order, i.e.,
\[ x = 0.123\cdots (N-1) 10 11 12 \cdots 1(N-1) 202122 \cdots 2(N-1)\cdots,\]
evaluated in base $N$, see \cite{Cha33}. The literature on normal numbers in integer base expansions is  extensive. For some background and further information, we refer to e.g.~\cite{KN74,Bug12} and the references therein.

\medskip
Normality in base $N$ is essentially a property of the digit sequence $(c_i)_{i\in \mathbb N}$ with respect to the probability vector $(\frac1N, \ldots, \frac1N)$ assigning frequency $\frac1N$ to each of the digits, together with some independence on the occurrence of digits. Normal numbers are then obtained by projecting a sequence $(c_i)_{i \in \mathbb N}$ in which all blocks of digits appear with the right frequencies to the corresponding point $ \sum_{k \ge 1} \frac{c_k}{N^k} \in [0,1]$. This article is concerned with normality in case the digit set contains a countably infinite number of digits, i.e., is given by $\mathbb N$, and the frequencies of the digits are given by any ordered probability sequence $L = (L_d)_{d \in \mathbb N}$, so with $0 < L_{d+1} \le L_d$ for each $d \in \mathbb N$ and $\sum_{d \in \mathbb N}L_d =1$. In analogy to the base $N$-expansions, we call a sequence $(a_i)_{i \in \mathbb N} \in \mathbb N^\mathbb N$ {\em $L$-normal} if for each $k \in \mathbb N$ and each $\alpha_1, \ldots, \alpha_k \in \mathbb N$,
\[ \lim_{n \to \infty} \frac{\{1 \le i \le n \, : \, a_i = \alpha_1, \ldots, a_{i+k-1} = \alpha_k \} }{n} = \prod_{j=1}^k L_{\alpha_j}.\]

\medskip
Normal sequences in this setting have already been considered in the case of {\em L\"uroth expansions}, which were introduced by L\"uroth in 1883 \cite{Lur83}. L\"uroth expansions of  numbers $x \in (0,1]$ are expressions of the form
\begin{equation}\label{q:lurothexp}
x = \sum_{n \ge 1} \frac{a_n}{ \prod_{i=1}^n a_i(a_i+1)}, \quad a_n \in \mathbb N,
\end{equation}
where the sum can either be finite or infinite. L\"uroth expansions are generated by iterating the transformation
\[ T_L: (0,1]\to (0,1], \, x \mapsto \left\lfloor \frac1x \right\rfloor \left( \left\lfloor \frac1x \right\rfloor +1 \right) x - \left\lfloor \frac1x \right\rfloor,\]
see Figure~\ref{f:luroth} for the graph. Setting for each $n \ge 1$ that $a_n(x) = a_1(T_L^{n-1}(x)) = \left\lfloor \frac1{T_L^{n-1}(x)} \right\rfloor $, one obtains the expression from \eqref{q:lurothexp}. As $a_n (x) = k$ if $T^{n-1}_L(x) \in (\frac1{k+1}, \frac1k \big]$ we expect the digit $k \in \mathbb N$ to appear with frequency $\frac1{k(k+1)}$. A L\"uroth normal number would therefore be a number $x \in (0,1]$ for which a sequence $(a_i)_{i \in \mathbb N} \in \mathbb N^\mathbb N$ exists such that \eqref{q:lurothexp} holds and $(a_i)_{i \in \mathbb N}$ is $L$-normal with respect to the probability sequence $L=\big(\frac1{k(k+1)}\big)_{k \in \mathbb N}$. Compared to base $N$-expansions, the two additional challenges in finding $L$-normal sequences are that the infinitely many different digits occur with different frequencies and that one cannot set up a process that adds blocks of digits according to their length, i.e., a straightforward generalisation of Champernowne's construction is impossible.

\medskip
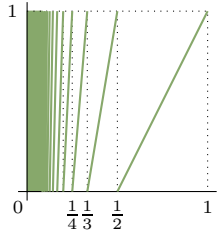
\begin{figure}[h]
\begin{tikzpicture}[scale=2.4]
\filldraw[fill=asparagus, draw=asparagus] (0,0) rectangle (1/12,1);
\draw(-.05,0)node[below]{\scriptsize 0}--(1/4,0)node[below]{\small $\frac14$}--(1/3,0)node[below]{\small $\frac13$}--(1/2,0)node[below]{\small $\frac12$}--(1,0)node[below]{\scriptsize 1}--(1.05,0)(0,-.05)--(0,1)node[left]{\scriptsize 1}--(0,1.05);
\draw[dotted](1/2,0)--(1/2,1)(1/3,0)--(1/3,1)(1/4,0)--(1/4,1)(1/5,0)--(1/5,1)(0,1)--(1,1)--(1,0);
\draw[thick, asparagus](.5,0)--(1,1)(1/3,0)--(1/2,1)(1/4,0)--(1/3,1)(1/5,0)--(1/4,1)(1/6,0)--(1/5,1)(1/7,0)--(1/6,1)(1/8,0)--(1/7,1)(1/9,0)--(1/8,1)(1/10,0)--(1/9,1)(1/11,0)--(1/10,1)(1/12,0)--(1/11,1);
\end{tikzpicture}
\caption{The graph of the L\"uroth transformation $T_L$.}
\label{f:luroth}
\end{figure}

\medskip
Normal numbers for L\"uroth expansions and the more general setting of so-called GLS-expansions are considered in \cite{Bok09,AP15,MM16,DLR20}. In \cite{Bok09,MM16} $L$-normal numbers for the probability sequence $L= \big(\frac1{k(k+1)}\big)_{k \in \mathbb N}$ and with the L\"uroth projection
\[ (a_i)_{i \in \mathbb N} \mapsto  \sum_{n \ge 1} \frac{a_n}{ \prod_{i=1}^n a_i(a_i+1)}\]
are considered. In the bachelor thesis \cite{Bok09} (supervised by Kraaikamp) Boks described a possible construction for an $L$-normal sequence, but the thesis does not contain a proof of normality. In \cite{MM16} Madritsch and Mance gave a quite general construction for $L$-normal sequences. It resembles Champernowne's construction in that one first adds many copies of all possible blocks of length 2 with digits 2 and 3 (so 22, 23, 32 and 33), then many copies of all blocks of length 3 with digits 2, 3 and 4, etcetera. L\"uroth expansions and base $N$-expansions are both special cases of {\em Generalised L\"uroth Series (GLS) expansions}, introduced in \cite{BBDK96}. These GLS expansions correspond to different probability sequences $L$ on the digit set $\mathbb N$ in case of infinitely many digits or to probability vectors $(L_d)_{1 \le d \le N}$ in the case of GLS expansions with $N$ digits, $N \in \mathbb N$. In \cite{Van14} Vandehey considered normal numbers for GLS expansions with finite digit sets.  In \cite{AP15} Aehle and Paulsen addressed the question of finding numbers with normal GLS expansions for infinite digit sets and established a connection with equidistributed sequences. In \cite{DLR20} Dajani, De Lepper and Robinson found an $L$-normal sequence for the probability sequence $L= \left(\frac1{2^n} \right)_{n \in \mathbb N }$.

\medskip
Our main result is that we provide a family of $L$-normal sequences for any ordered probability sequence $L$. The construction for this family is based on the binary tree introduced in \cite{Bok09} for the sequence $L= \big(\frac1{k(k+1)}\big)_{k \in \mathbb N}$, which we have adapted to also work for other sequences $L$, in particular those where the numbers $L_d$ are not necessarily rational. As in \cite{MM16} we also add many copies of blocks of digits, but we do not add them by increasing block length and compared to \cite{MM16} our construction requires adding significantly fewer copies of the blocks. After having constructed $L$-normal sequences we project these sequences to real numbers that have a normal GLS expansions, of which normal L\"uroth expansions are a particular case.

\medskip
As for L\"uroth expansions, many types of number expansions are generated by dynamical systems which have an invariant measure that is ergodic and absolutely continuous with respect to Lebesgue measures. (In case of the L\"uroth transformation, Lebesgue measure itself is invariant and ergodic.) The Birkhoff ergodic theorem  then implies that the proportion of time that a typical orbit spends in any measurable subset of the interval $[0,1]$ equals the measure of that subset. Normal numbers are explicit examples of such typical orbits and one can therefore also consider them for certain dynamical systems without a strong association to number expansions. In the last section we illustrate this with normal numbers for higher dimensional GLS transformations. 

\medskip
The article is outlined as follows. In Section~\ref{s:tree} we give some preliminaries, we introduce the rooted binary tree on which our construction is based and we prove some properties of the tree. In Section~\ref{s:normal} we provide the construction of a family of $L$-normal sequences and give the proof of normality. Section~\ref{s:GLS} is dedicated to relating the $L$-normal sequences found in Section~\ref{s:normal} to normal numbers with respect to GLS expansions in one and higher dimensions.

\section{The $L$-tree}\label{s:tree}
\subsection{Notation for words and sequences}
Throughout the text we use the notation $\llbracket n,k \rrbracket$ for intervals in the integers in the following way: for $n,k \in \mathbb Z$ let
\[ \llbracket n,k \rrbracket = \{ i \in \mathbb Z \, : \, n \le i \le k\}.\]
We consider words and sequences in the alphabet $\mathcal A= \mathbb N$. For clarity of notation, we use $\mathcal A$ instead of $\mathbb N$ when we refer to $\mathbb N$ as the digit set. For each $n \ge 1$ the set $\mathcal A^n$ contains all {\em words} of length $n$, which are all strings $\bm{u} = u_1 \ldots u_n$ with $u_i \in \mathcal A$ for all $1 \le i \le n$. The set $\mathcal A^0$ is the set containing only the empty word, which we denote by $\epsilon$. For us it will be convenient to let $\mathcal A^* = \bigcup_{n \ge 1} \mathcal A^n$ be the set of all words with digits in $\mathcal A$ and let $\mathcal A^*_\epsilon =  \bigcup_{n \ge 0} \mathcal A^n$ be the set of all words with digits in $\mathcal A$ together with the empty word. Let $|\bm{u}|$ denote the length of $\bm{u}$, so $|\bm{u}|=n$ if $\bm{u} \in \mathcal A^n$, $n \ge 0$. If $\bm u = u_1 \ldots u_m \in \mathcal A^m$ for some $m \ge 1$ and $1 \le n \le m$, then we let $\bm{u}|_n = u_1\cdots u_n$. For a word $\alpha \in \mathcal A^*$ we let
\begin{equation}\label{q:cylinder}
\mathcal A(\alpha) = \{ \bm{u} \in \mathcal A^* \, : \, \bm{u}|_{|\alpha|} = \alpha\}
\end{equation}
be the set of words that start with $\alpha$. For two words $\bm{u}, \bm{v} \in \mathcal A^*$ we use $\bm{uv}$ to denote the concatenation. For a word $\bm{u} = u_1\ldots u_n \in \mathcal A^n$, $n \ge 1$, set $\bm{u}^+ = u_1 \ldots u_{n-1}(u_n+1)$. For two words $\bm{u} = u_1\cdots u_n, \bm{v} = v_1\cdots v_k \in \mathcal A^*$ we say that $\bm{v}$ is a {\em subword} of $\bm{u}$ if there is a $1 \le j \le n$ such that $u_j \cdots u_{j+k-1} = \bm{v}$ and $\bm{v}$ is a {\em prefix} of $\bm{u}$ if it is a subword with $j=1$. In particular this implies that $1 \le k \le n$. 

\medskip
Let $\mathcal A^\mathbb N$ be the set of all one-sided infinite sequences $ (a_i)_{i \ge 1}$ with $a_i \in \mathcal A$ for all $i \ge 1$. Throughout the text we fix a sequence $L = (L_d)_{d \in \mathbb N}$ with $0 < L_{d+1} \le L_d < 1$ for all $d \in \mathbb N$ and $\sum_{d \in \mathbb N}L_d =1$. The numbers $L_d$ represent the probabilities with which we would like to see each of the digits $d \in \mathcal A$ appear in a sequence $A \in \mathcal A^\mathbb N$. Note that the ordering of the entries of $L$ according to size is just for notational convenience, since by relabelling we could obtain any redistribution of the probabilities over the digits. For a word $\alpha = \alpha_1 \cdots \alpha_k \in \mathcal A^*$ we set
\[ \mu(\alpha) := \prod_{i=1}^k L_{\alpha_i}\]
and we let $\mu(\epsilon)=1$. For any $\alpha \in \mathcal A^*$ and subset $T \subseteq \mathbb N$ define
\[ N_A(\alpha, T):= \frac{\#\{i \in T\, :\,  a_i=\alpha_1,  \ldots,   a_{i+k-1}= \alpha_k\}}{\#T}.\]
We call a sequence $A \in \mathcal A^\mathbb N$ {\em $L$-normal} if for all $\alpha \in \mathcal A^*$,
\[ \lim_{n \to \infty} N_A(\alpha,\llbracket 1,n\rrbracket)=\mu(\alpha).\]

\subsection{Construction of the tree}
For our fixed probability sequence $L = (L_d)_{d \in \mathbb N}$ we construct a rooted, labeled, binary probability tree, which is strongly reminiscent of the tree introduced in \cite{Bok09} and which we call the $L$-tree. For each $n \ge 1$ set
\[ p_n := \frac{L_n}{1-\sum_{i=1}^{n-1}L_i} \quad \text{ and } \quad q_n := 1-p_n = \frac{1-\sum_{i=1}^nL_i}{1-\sum_{i=1}^{n-1}L_i},\]
where we let $\sum_{i=1}^0 L_i =0$. One can think of 
$p_n$ as the probability of seeing the digit $n$ while knowing that the digit is at least $n$ and of $q_n$ as the probability of seeing a digit unequal to $n$ while knowing that the digit is at least $n$.

\medskip
The set of nodes of the $L$-tree is $\mathcal A^*$ with root 1. The set of edges $\mathcal E$ is given by
\[ \mathcal E = \{ (\bm{u}, \bm{u}1), (\bm{u}, \bm{u}^+) \, : \, \bm{u} \in \mathcal A^* \}.\]
To be more precise, each node $\bm{u}=u_1 \ldots u_n$ has two children: a left child with label $\bm{u}1$ and a right child with label $\bm{u}^+$.  We consider the edge labelling of the tree given by the map
\[ f: \mathcal E  \to [0,1] ; \, (\bm{u},\bm{v}) \mapsto \begin{cases}
p_{u_n}, & \text{if } \bm{u} = u_1\ldots u_n, \, \bm{v} = \bm{u}1,\\
q_{u_n}, & \text{if } \bm{u} = u_1\ldots u_n, \, \bm{v} = \bm{u}^+.
\end{cases}\]
Thus for $n \ge 1$ the $L$-tree uses the rule
\begin{center}
\begin{forest}
for tree={%
    s sep'+=5mm,
    l'+=5mm,
    }
[ $ \text{$u_1u_2\dots  u_n$} $
   [ {$ \text{$u_1u_2\dots  u_n1 $} $},  tier=een, edge label={node[midway, above=1mm, left]{$p_{u_n}$}}
   ]
   [ {$\text{$u_1u_2\dots  u_n^+.$} $},  tier=een,  edge label={node[midway, above=1mm,  right]{$q_{u_n}$}}
   ]
]
\end{forest}
\end{center}
Figure~\ref{f:tree} shows the first six levels of the $L$-tree with the nodes written vertically. Note that for each $\bm{u} \in \mathcal A^*_\epsilon$ and each $\bm{v} \in \mathcal A^*$ it holds that
\begin{equation}\label{q:invariant}
f(\bm{uv}, \bm{uv}1) = f(\bm{v}, \bm{v}1) \quad \text{ and } \quad f(\bm{uv}, \bm{uv}^+) = f(\bm{v}, \bm{v}^+).
\end{equation}

\begin{center}
\begin{figure}[h]
\begin{tikzpicture}[scale=.95]
\node at (0,0) {\scriptsize 1};
\draw (-.3,-.1)--(-3.7,-.9)(.3,-.1)--(3.7,-.9);
\node at (-1.8,-.2) {\color{bittersweet}\scriptsize $p_1$};
\node at (1.8,-.2) {\color{bittersweet}\scriptsize $q_1$};
\node at (-4,-1) {\scriptsize \begin{tabular}{c}1 \\ 1\end{tabular}};
\node at (4,-1) {\scriptsize 2};
\draw (-4.3,-1.1)--(-5.7,-1.8)(-3.7,-1.1)--(-2.3,-1.8);
\node at (-5.1,-1.25) {\color{bittersweet}\scriptsize $p_1$};
\node at (-2.9,-1.25) {\color{bittersweet}\scriptsize $q_1$};
\node at (-6,-2) {\scriptsize \begin{tabular}{c}1 \\ 1\\1 \end{tabular}};
\node at (-2,-2) {\scriptsize \begin{tabular}{c}1 \\ 2\end{tabular}};
\draw (4.2,-1.2)--(5.8,-1.8)(3.8,-1.2)--(2.2,-1.8);
\node at (6,-2) {\scriptsize 3};
\node at (2,-2) {\scriptsize \begin{tabular}{c}2 \\ 1\end{tabular}};
\node at (5.1,-1.25) {\color{bittersweet}\scriptsize $q_2$};
\node at (2.9,-1.25) {\color{bittersweet}\scriptsize $p_2$};
\draw(-6.2,-2.2)--(-6.7,-2.7)(-5.8,-2.2)--(-5.3,-2.7);
\node at (-6.55,-2.25) {\color{bittersweet}\scriptsize $p_1$};
\node at (-5.4,-2.25) {\color{bittersweet}\scriptsize $q_1$};
\node at (-7,-3) {\scriptsize \begin{tabular}{c}1 \\ 1\\ 1\\ 1\end{tabular}};
\node at (-5,-3) {\scriptsize \begin{tabular}{c}1 \\ 1\\ 2\end{tabular}};
\draw(-2.2,-2.2)--(-2.7,-2.7)(-1.8,-2.2)--(-1.3,-2.7);
\node at (-2.6,-2.25) {\color{bittersweet}\scriptsize $p_2$};
\node at (-1.4,-2.25) {\color{bittersweet}\scriptsize $q_2$};
\node at (-3,-3) {\scriptsize \begin{tabular}{c}1\\ 2\\ 1\end{tabular}};
\node at (-1,-3) {\scriptsize \begin{tabular}{c}1\\ 3\end{tabular}};
\draw(6.2,-2.2)--(6.7,-2.7)(5.8,-2.2)--(5.3,-2.7);
\node at (7,-3) {\scriptsize 4};
\node at (5,-3) {\scriptsize \begin{tabular}{c}3 \\ 1\end{tabular}};
\node at (6.6,-2.25) {\color{bittersweet}\scriptsize $q_3$};
\node at (5.4,-2.25) {\color{bittersweet}\scriptsize $p_3$};
\draw(2.2,-2.2)--(2.7,-2.7)(1.8,-2.2)--(1.3,-2.7);
\node at (3,-3) {\scriptsize \begin{tabular}{c}2 \\ 2\end{tabular}};
\node at (1,-3) {\scriptsize \begin{tabular}{c}2\\ 1 \\ 1\end{tabular}};
\node at (2.6,-2.25) {\color{bittersweet}\scriptsize $q_1$};
\node at (1.4,-2.25) {\color{bittersweet}\scriptsize $p_1$};

\draw(-7.2,-3.2)--(-7.4,-3.7)(-6.8,-3.2)--(-6.6,-3.7);
\node at (-7.5,-4.5) {\scriptsize \begin{tabular}{c}1 \\ 1\\ 1\\ 1\\ 1\end{tabular}};
\node at (-6.5,-4.35) {\scriptsize \begin{tabular}{c}1 \\ 1\\ 1\\ 2\end{tabular}};
\node at (-7.5,-3.35) {\color{bittersweet}\scriptsize $p_1$};
\node at (-6.5,-3.35) {\color{bittersweet}\scriptsize $q_1$};
\draw(-5.2,-3.2)--(-5.4,-3.7)(-4.8,-3.2)--(-4.6,-3.7);
\node at (-5.5,-4.35) {\scriptsize \begin{tabular}{c}1 \\ 1\\ 2 \\ 1\end{tabular}};
\node at (-4.5,-4.2) {\scriptsize \begin{tabular}{c}1\\ 1\\ 3\end{tabular}};
\node at (-5.5,-3.35) {\color{bittersweet}\scriptsize $p_2$};
\node at (-4.5,-3.35) {\color{bittersweet}\scriptsize $q_2$};

\draw(-3.2,-3.2)--(-3.4,-3.7)(-2.8,-3.2)--(-2.6,-3.7);
\node at (-3.5,-4.45) {\scriptsize \begin{tabular}{c}1\\ 2\\ 1\\ 1\end{tabular}};
\node at (-2.5,-4.3) {\scriptsize \begin{tabular}{c}1\\ 2\\ 2\end{tabular}};
\node at (-3.5,-3.35) {\color{bittersweet}\scriptsize $p_1$};
\node at (-2.5,-3.35) {\color{bittersweet}\scriptsize $q_1$};
\draw(-1.2,-3.2)--(-1.4,-3.7)(-.8,-3.2)--(-.6,-3.7);
\node at (-1.5,-4.3) {\scriptsize \begin{tabular}{c}1\\ 3\\ 1\end{tabular}};
\node at (-.5,-4.15) {\scriptsize \begin{tabular}{c}1\\ 4\end{tabular}};
\node at (-1.5,-3.35) {\color{bittersweet}\scriptsize $p_3$};
\node at (-.5,-3.35) {\color{bittersweet}\scriptsize $q_3$};

\draw(7.2,-3.2)--(7.4,-3.7)(6.8,-3.2)--(6.6,-3.7);
\node at (7.5,-4) {\scriptsize \begin{tabular}{c}5\end{tabular}};
\node at (6.5,-4.15) {\scriptsize \begin{tabular}{c}4 \\ 1\end{tabular}};
\node at (7.5,-3.35) {\color{bittersweet}\scriptsize $q_4$};
\node at (6.5,-3.35) {\color{bittersweet}\scriptsize $p_4$};
\draw(5.2,-3.2)--(5.4,-3.7)(4.8,-3.2)--(4.6,-3.7);
\node at (5.5,-4.15) {\scriptsize \begin{tabular}{c}3 \\ 2 \end{tabular}};
\node at (4.5,-4.3) {\scriptsize \begin{tabular}{c}3\\ 1\\ 1 \end{tabular}};
\node at (5.5,-3.35) {\color{bittersweet}\scriptsize $q_1$};
\node at (4.5,-3.35) {\color{bittersweet}\scriptsize $p_1$};

\draw(3.2,-3.2)--(3.4,-3.7)(2.8,-3.2)--(2.6,-3.7);
\node at (3.5,-4.15) {\scriptsize \begin{tabular}{c}2\\ 3\end{tabular}};
\node at (2.5,-4.3) {\scriptsize \begin{tabular}{c}2\\ 2\\ 1\end{tabular}};
\node at (3.5,-3.35) {\color{bittersweet}\scriptsize $q_2$};
\node at (2.5,-3.35) {\color{bittersweet}\scriptsize $p_2$};
\draw(1.2,-3.2)--(1.4,-3.7)(.8,-3.2)--(.6,-3.7);
\node at (1.5,-4.3) {\scriptsize \begin{tabular}{c}2\\ 1\\ 2\end{tabular}};
\node at (.5,-4.45) {\scriptsize \begin{tabular}{c}2\\ 1\\ 1\\ 1\end{tabular}};
\node at (.5,-3.35) {\color{bittersweet}\scriptsize $p_1$};
\node at (1.5,-3.35) {\color{bittersweet}\scriptsize $q_1$};

\draw(-7.7,-5)--(-7.8,-5.5)(-7.3,-5)--(-7.2,-5.5);
\draw(-6.7,-5)--(-6.8,-5.5)(-6.3,-5)--(-6.2,-5.5);
\node at (-7.8,-6.5) {\scriptsize \begin{tabular}{c}1 \\ 1\\ 1\\ 1\\ 1\\ 1\end{tabular}};
\node at (-7.2,-6.35) {\scriptsize \begin{tabular}{c}1 \\ 1\\ 1\\ 1\\ 2\end{tabular}};
\node at (-6.8,-6.35) {\scriptsize \begin{tabular}{c}1 \\ 1\\ 1\\ 2\\ 1 \end{tabular}};
\node at (-6.2,-6.2) {\scriptsize \begin{tabular}{c}1 \\ 1\\ 1\\ 3\end{tabular}};
\node at (-7.9,-5.1) {\color{bittersweet}\scriptsize $p_1$};
\node at (-7.1,-5.1) {\color{bittersweet}\scriptsize $q_1$};
\node at (-6.97,-5.4) {\color{bittersweet}\scriptsize $p_2$};
\node at (-6.03,-5.4) {\color{bittersweet}\scriptsize $q_2$};

\draw(-5.7,-5)--(-5.8,-5.5)(-5.3,-5)--(-5.2,-5.5);
\draw(-4.6,-4.8)--(-4.8,-5.5)(-4.4,-4.8)--(-4.2,-5.5);
\node at (-5.8,-6.35) {\scriptsize \begin{tabular}{c}1 \\ 1\\ 2 \\ 1\\ 1 \end{tabular}};
\node at (-5.2,-6.2) {\scriptsize \begin{tabular}{c}1\\ 1\\ 2\\ 2 \end{tabular}};
\node at (-4.8,-6.2) {\scriptsize \begin{tabular}{c}1 \\ 1\\ 3 \\ 1 \end{tabular}};
\node at (-4.2,-6.05) {\scriptsize \begin{tabular}{c}1\\ 1\\ 4 \end{tabular}};
\node at (-5.9,-5.1) {\color{bittersweet}\scriptsize $p_1$};
\node at (-5.1,-5.1) {\color{bittersweet}\scriptsize $q_1$};
\node at (-4.97,-5.4) {\color{bittersweet}\scriptsize $p_3$};
\node at (-4.03,-5.4) {\color{bittersweet}\scriptsize $q_3$};

\draw(-3.7,-5)--(-3.8,-5.5)(-3.3,-5)--(-3.2,-5.5);
\draw(-2.6,-4.8)--(-2.8,-5.5)(-2.4,-4.8)--(-2.2,-5.5);
\node at (-3.8,-6.35) {\scriptsize \begin{tabular}{c}1 \\ 2\\ 1\\ 1\\ 1\end{tabular}};
\node at (-3.2,-6.2) {\scriptsize \begin{tabular}{c}1 \\ 2\\ 1\\ 2\end{tabular}};
\node at (-2.8,-6.2) {\scriptsize \begin{tabular}{c}1 \\ 2\\ 2\\ 1 \end{tabular}};
\node at (-2.2,-6.05) {\scriptsize \begin{tabular}{c}1 \\ 2 \\ 3\end{tabular}};
\node at (-3.9,-5.1) {\color{bittersweet}\scriptsize $p_1$};
\node at (-3.1,-5.1) {\color{bittersweet}\scriptsize $q_1$};
\node at (-2.97,-5.4) {\color{bittersweet}\scriptsize $p_2$};
\node at (-2.03,-5.4) {\color{bittersweet}\scriptsize $q_2$};

\draw(-1.6,-4.8)--(-1.8,-5.5)(-1.4,-4.8)--(-1.2,-5.5);
\draw(-.6,-4.8)--(-.8,-5.5)(-.4,-4.8)--(-.2,-5.5);
\node at (-1.8,-6.2) {\scriptsize \begin{tabular}{c}1 \\ 3 \\ 1\\ 1 \end{tabular}};
\node at (-1.2,-6.05) {\scriptsize \begin{tabular}{c}1\\ 3\\ 2 \end{tabular}};
\node at (-.8,-6.05) {\scriptsize \begin{tabular}{c}1 \\ 4 \\ 1 \end{tabular}};
\node at (-.2,-5.9) {\scriptsize \begin{tabular}{c}1\\ 5 \end{tabular}};
\node at (-1.85,-5.05) {\color{bittersweet}\scriptsize $p_1$};
\node at (-1.15,-5.05) {\color{bittersweet}\scriptsize $q_1$};
\node at (-.97,-5.4) {\color{bittersweet}\scriptsize $p_4$};
\node at (-.03,-5.4) {\color{bittersweet}\scriptsize $q_4$};

\draw(7.6,-4.2)--(7.8,-5.5)(7.4,-4.2)--(7.2,-5.5);
\draw(6.6,-4.5)--(6.8,-5.5)(6.4,-4.5)--(6.2,-5.5);
\node at (7.8,-5.75) {\scriptsize \begin{tabular}{c}6\end{tabular}};
\node at (7.2,-5.9) {\scriptsize \begin{tabular}{c}5 \\ 1\end{tabular}};
\node at (6.8,-5.9) {\scriptsize \begin{tabular}{c}4 \\ 2 \end{tabular}};
\node at (6.2,-6.05) {\scriptsize \begin{tabular}{c}4 \\ 1\\1 \end{tabular}};
\node at (7.97,-5.4) {\color{bittersweet}\scriptsize $q_5$};
\node at (7.03,-5.4) {\color{bittersweet}\scriptsize $p_5$};
\node at (6.87,-4.95) {\color{bittersweet}\scriptsize $q_1$};
\node at (6.15,-4.95) {\color{bittersweet}\scriptsize $p_1$};

\draw(5.6,-4.5)--(5.8,-5.5)(5.4,-4.5)--(5.2,-5.5);
\draw(4.6,-4.8)--(4.8,-5.5)(4.4,-4.8)--(4.2,-5.5);
\node at (5.8,-5.9) {\scriptsize \begin{tabular}{c}3 \\ 3 \end{tabular}};
\node at (5.2,-6.05) {\scriptsize \begin{tabular}{c}3\\ 2\\ 1 \end{tabular}};
\node at (4.8,-6.05) {\scriptsize \begin{tabular}{c}3 \\  1\\ 2  \end{tabular}};
\node at (4.2,-6.2) {\scriptsize \begin{tabular}{c}3\\ 1\\ 1\\1 \end{tabular}};
\node at (5.97,-5.4) {\color{bittersweet}\scriptsize $q_2$};
\node at (5.03,-5.4) {\color{bittersweet}\scriptsize $p_2$};
\node at (4.85,-5.05) {\color{bittersweet}\scriptsize $q_1$};
\node at (4.15,-5.05) {\color{bittersweet}\scriptsize $p_1$};

\draw(3.6,-4.8)--(3.8,-5.5)(3.4,-4.8)--(3.2,-5.5);
\draw(2.6,-4.8)--(2.8,-5.5)(2.4,-4.8)--(2.2,-5.5);
\node at (3.8,-5.9) {\scriptsize \begin{tabular}{c}2 \\ 4\end{tabular}};
\node at (3.2,-6.05) {\scriptsize \begin{tabular}{c}2 \\ 3\\ 1\end{tabular}};
\node at (2.8,-6.05) {\scriptsize \begin{tabular}{c} 2\\ 2\\ 2 \end{tabular}};
\node at (2.2,-6.2) {\scriptsize \begin{tabular}{c}2 \\ 2 \\ 1\\ 1\end{tabular}};
\node at (3.97,-5.4) {\color{bittersweet}\scriptsize $q_3$};
\node at (3.03,-5.4) {\color{bittersweet}\scriptsize $p_3$};
\node at (2.85,-5.05) {\color{bittersweet}\scriptsize $q_1$};
\node at (2.15,-5.05) {\color{bittersweet}\scriptsize $p_1$};

\draw(1.6,-4.8)--(1.8,-5.5)(1.4,-4.8)--(1.2,-5.5);
\draw(.7,-5)--(.8,-5.5)(.3,-5)--(.2,-5.5);
\node at (1.8,-6.05) {\scriptsize \begin{tabular}{c}2 \\ 1 \\ 3 \end{tabular}};
\node at (1.2,-6.2) {\scriptsize \begin{tabular}{c}2\\ 1\\ 2\\ 1 \end{tabular}};
\node at (.8,-6.2) {\scriptsize \begin{tabular}{c}2 \\ 1 \\ 1\\ 2 \end{tabular}};
\node at (.2,-6.35) {\scriptsize \begin{tabular}{c}2\\ 1\\ 1\\ 1\\ 1 \end{tabular}};
\node at (1.97,-5.4) {\color{bittersweet}\scriptsize $q_2$};
\node at (1.03,-5.4) {\color{bittersweet}\scriptsize $p_2$};
\node at (.9,-5.15) {\color{bittersweet}\scriptsize $q_1$};
\node at (.1,-5.15) {\color{bittersweet}\scriptsize $p_1$};

\node at (0,-7.5) {$\vdots$};
\end{tikzpicture}
\caption{The first six levels of the  $L$-tree.}
\label{f:tree}
\end{figure}
\end{center}

For each word $\bm{u}=u_1 \ldots u_n \in \mathcal A^*_\epsilon$, define the {\em depth} of $\bm{u}$ by
\[ D(\bm{u}) = \sum_{i=1}^n u_i,\]
with $\sum_{i=1}^0 u_i=0$. Then $D(\bm{u})$ is the number of nodes in the path starting from the root 1 and ending in the node $\bm{u}$. Immediately from the definition we obtain the identity
\begin{equation}\label{q:Ds}
D(\bm{uv}) = D(\bm{u}) + D(\bm{v}), \quad  \bm{u},\bm{v} \in \mathcal A^*_\epsilon.
\end{equation}
For what follows later it is useful to introduce the notation $\bm{u}^{-2}$ for the {\em grandparent} of a word $\bm{u} \in \mathcal A^*$ with $D(\bm{u}) \ge 3$, which is the unique word $\bm{v} \in \mathcal A^*$ such that $\bm{u} \in \{ \bm{v}11, \bm{v}2, \bm{v}^+1, (\bm{v}^+)^+\}$. For each $n \ge 1$ write $G(n)$ for the collection of words that appear at depth $n$ of the tree, so
\[ G(n) = \{ \bm{u} \in \mathcal A^* \, : \, D(\bm{u})=n\}.\]
For any word $\bm{u}=u_1\ldots u_n \in \mathcal A^*$, write $1=\bm{u}_1, \bm{u}_2, \ldots, \bm{u}_{D(\bm{u})}=\bm{u}$ for the sequence of nodes in the path from 1 to $\bm{u}$. Then
\[ W(\bm{u}) = \prod_{i=1}^{D(\bm{u})-1} f(\bm{u}_i, \bm{u}_{i+1})\]
denotes the product of the probabilities along the path from 1 to $\bm{u}$, which we call the {\em weight} of $\bm{u}$. Here we let $\prod_{i=1}^{0} f(\bm{u}_i, \bm{u}_{i+1})=1$. We collect the some properties of the weight $W$.

\begin{lemma}\label{l:weight}
The following statements hold.
\begin{itemize}
\item[(i)] For any $n \in \mathbb N$, $\sum_{\bm{u} \in G(n)} W(\bm{u})=1$.
\vspace{.15cm}
\item[(ii)] For any $\bm{u},\bm{v} \in \mathcal A_{\epsilon}^*$, $W(\bm{uv}) = W(\bm{u}1)W(\bm{v})$.
\vspace{.15cm}
\item[(iii)] For any $\bm{u} \in \mathcal A_{\epsilon}^*$ $W(\bm{u}1) = \mu(\bm{u})$.
\end{itemize}
\end{lemma}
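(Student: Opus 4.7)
My plan is to handle the three parts in the order (i), (ii), (iii), since (ii) is a path-level statement that feeds nicely into (iii) by induction.

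For (i) I would induct on $n$, with the base $G(1)=\{1\}$, $W(1)=1$ being immediate from the empty product convention. For the inductive step, note that the two children $\bm{u}1, \bm{u}^+$ of any $\bm{u}=u_1\cdots u_n\in G(n)$ both belong to $G(n+1)$, and conversely every non-root node admits a unique parent (remove a terminal $1$ if the last digit is $1$, otherwise decrement the last digit $\ge 2$). These two rules are mutually exclusive based on the last digit, so the parent map $G(n+1)\to G(n)$ is a well-defined $2$-to-$1$ surjection whose fibres are exactly the sibling pairs $\{\bm{u}1,\bm{u}^+\}$. Combined with $p_{u_n}+q_{u_n}=1$ this gives
\[
\sum_{\bm{w}\in G(n+1)} W(\bm{w}) \;=\; \sum_{\bm{u}\in G(n)} \bigl(W(\bm{u}1)+W(\bm{u}^+)\bigr) \;=\; \sum_{\bm{u}\in G(n)} W(\bm{u})\,(p_{u_n}+q_{u_n}) \;=\; \sum_{\bm{u}\in G(n)} W(\bm{u}) \;=\; 1.
\]

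For (ii) I would argue at the level of paths. Writing $\bm{v}=v_1\cdots v_m$ with $m\ge 1$ (the case $\bm{v}=\epsilon$ requires the convention $W(\epsilon)=1$ and is handled trivially), any path from $1$ to $\bm{uv}$ must pass through $\bm{u}$ and then through its left child $\bm{u}1$, because $\bm{u}^+$ is not a prefix of any descendant of $\bm{u}$. The segment from $\bm{u}1$ down to $\bm{uv}$ performs the same sequence of left/right moves as the path from $1$ to $\bm{v}$, namely $v_1-1$ right-moves, a left-move, $v_2-1$ right-moves, a left-move, and so on, ending with $v_m-1$ right-moves. By the invariance identity \eqref{q:invariant} each corresponding edge carries the same weight, so the product along the segment equals $W(\bm{v})$, yielding $W(\bm{uv})=W(\bm{u}1)W(\bm{v})$. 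The case $\bm{u}=\epsilon$ is immediate since $W(1)=1$.

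Statement (iii) then follows by induction on $|\bm{u}|$ using (ii), with base case $W(1)=1=\mu(\epsilon)$. For the inductive step write $\bm{u}=\bm{u}'k$ and apply (ii) to get $W(\bm{u}'k1)=W(\bm{u}'1)\,W(k1)$; the inductive hypothesis supplies $W(\bm{u}'1)=\mu(\bm{u}')$, so it only remains to compute $W(k1)=L_k$. Reading along the root-to-$k1$ path one finds $W(k1)=\bigl(\prod_{j=1}^{k-1}q_j\bigr)p_k$, and the telescoping calculation $\prod_{j=1}^{k-1}q_j = 1-\sum_{i=1}^{k-1}L_i$ cancels the denominator in $p_k=L_k/(1-\sum_{i=1}^{k-1}L_i)$ and leaves exactly $L_k$. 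The only genuinely computational step in the whole lemma is this telescope; everything else is a straightforward bookkeeping argument using \eqref{q:invariant} and the tree structure.
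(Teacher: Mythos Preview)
Your proof is correct and mirrors the paper's argument closely: induction/telescoping via $p_k+q_k=1$ for (i), splitting the root-to-$\bm{uv}$ path at the node $\bm{u}1$ and invoking the invariance \eqref{q:invariant} for (ii), and reducing (iii) to the single-digit computation $W(k1)=\bigl(\prod_{j<k}q_j\bigr)p_k=L_k$ via (ii). One tiny quibble: your parenthetical that the case $\bm{v}=\epsilon$ is ``handled trivially'' by setting $W(\epsilon)=1$ is not right---that would force $W(\bm{u})=W(\bm{u}1)$, i.e.\ $p_{u_n}=1$---but this boundary case is immaterial to the uses of the lemma and the paper's own proof implicitly treats only nonempty $\bm{v}$ as well.
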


\begin{proof}
For (i), fix an $n \in \mathbb N$. Since $p_k + q_k=1$ for each $k \in \mathbb N$, it holds for each $\bm{u} \in G(k)$ that
\[ W(\bm{u}) = W(\bm{u}1) + W(\bm{u}^+).\]
Therefore,
\begin{equation}\label{q:one}\begin{split}
\sum_{\bm{u} \in G(n)} W(\bm{u}) =\ & \sum_{\bm{u} \in G(n-1)} W(\bm{u}1) + \sum_{\bm{u} \in G(n-1)} W(\bm{u}^+)\\
=\ & \sum_{\bm{u} \in G(n-1)} W(\bm{u}) = \cdots = \sum_{\bm{u} \in G(1)} W(\bm{u})  =1.
\end{split}
\end{equation}
For (ii), using \eqref{q:invariant} we obtain
\[ \begin{split}
W(\bm{uv})=\ & \prod_{j=1}^{D(\bm{uv})-1} f\big((\bm{uv})_j, (\bm{uv})_{j+1}\big)\\
=\ & \left( \prod_{j=1}^{D(\bm{u})-1} f(\bm{u}_j, \bm{u}_{j+1}) \right) \cdot f(\bm{u}, \bm{u}1) \cdot \left( \prod_{j=D(\bm{u})+1}^{D(\bm{uv})-1} f\big((\bm{uv})_j, (\bm{uv})_{j+1}\big)\right)\\
=\ & \prod_{j=1}^{D(\bm{u}1)-1} f\big((\bm{u}1)_j, (\bm{u}1)_{j+1}\big) \prod_{j=1}^{D(\bm{v})-1} f(\bm{v}_j, \bm{v}_{j+1})\\
=\ & W(\bm{u}1)W(\bm{v}).
\end{split}\]
For (iii) we first consider a single digit $d \in \mathcal A$ and note that
\begin{equation}\label{extra}
W(d) = \prod_{j=1}^{d-1} q_j = 1-\sum_{j=1}^{d-1}L_j.
\end{equation}
This gives $W(d1) = p_d \prod_{j=1}^{d-1} q_j = L_d$. Then for $\bm{u} = u_1 \ldots u_n$ by repeated application of this fact and (ii) we get 
\[ \begin{split}
W(\bm{u}1) =\ & W(u_1 \ldots u_{n-1} 1)W(u_n1) = W(u_1 \ldots u_{n-2} 1)W(u_{n-1}1)L_{u_n} \\
 =\ & \cdots = \prod_{j=1}^n W(u_j1) =\prod_{j=1}^n L_{u_j} = \mu(\bm{u}). \qedhere
 \end{split}\]
\end{proof}

The following lemma will be useful later.
\begin{lemma}\label{l:unbounded} The following holds:
\[ \lim_{n \to \infty} \sum_{\bm{u} \in G(n)} W(\bm{u}) |\bm{u}| = \infty.\]
\end{lemma}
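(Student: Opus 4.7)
The plan is to recognise the sum in question as the expectation of a renewal-type counting variable and then show that this expectation diverges. Introduce independent $\mathcal A$-valued random variables $X_1, X_2, \ldots$ with $\mathbb P(X_i = d) = L_d$, set $S_0 = 0$ and $S_k = X_1 + \cdots + X_k$, and let $K_n = \min\{k \ge 1 : S_k \ge n\}$. I want to identify the given sum with $\mathbb E[K_n]$ and then use that each $S_M$ is almost surely finite.

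First I would rewrite the weights. By \eqref{extra} together with repeated application of Lemma~\ref{l:weight}(ii), every $\bm u = u_1 \cdots u_k \in \mathcal A^*$ satisfies
\[ W(\bm u) \;=\; L_{u_1} \cdots L_{u_{k-1}} \Bigl(1 - \sum_{j = 1}^{u_k - 1} L_j\Bigr) \;=\; \mathbb P\bigl(X_1 = u_1, \ldots, X_{k-1} = u_{k-1},\; X_k \ge u_k\bigr). \]
Let $p_{n,k}$ denote the total $W$-weight of words in $G(n)$ of length $k$. Summing the identity above over all prefixes $(u_1, \ldots, u_{k-1})$ with sum strictly less than $n$ and using independence yields $p_{n,k} = \mathbb P(S_{k-1} < n \le S_k) = \mathbb P(K_n = k)$. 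The telescoping relation $\{S_{k-1} < n \le S_k\} = \{S_{k-1} < n\} \setminus \{S_k < n\}$ then gives
\[ \sum_{k=1}^{M} p_{n,k} \;=\; 1 - \mathbb P(S_M < n) \;=\; \mathbb P(S_M \ge n). \]

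Since $S_M$ is an almost surely finite sum of finitely many almost surely finite random variables, $\mathbb P(S_M \ge n) \to 0$ as $n \to \infty$. Combined with $\sum_{k \ge 1} p_{n,k} = 1$ from Lemma~\ref{l:weight}(i), this gives
\[ \sum_{\bm u \in G(n)} W(\bm u) |\bm u| \;=\; \sum_{k \ge 1} k\, p_{n,k} \;\ge\; (M+1) \sum_{k > M} p_{n,k} \;=\; (M+1)\bigl(1 - \mathbb P(S_M \ge n)\bigr), \]
so $\liminf_{n \to \infty} \sum_{\bm u \in G(n)} W(\bm u) |\bm u| \ge M + 1$ for every $M \in \mathbb N$, proving the lemma. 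The only step requiring real care is the identification $p_{n,k} = \mathbb P(K_n = k)$: in the factorisation of $W(\bm u)$, the first $k-1$ entries contribute the genuine mass function $L_{u_1} \cdots L_{u_{k-1}}$ of $(X_1, \ldots, X_{k-1})$, while the final factor $1 - \sum_{j < u_k} L_j$ is exactly the survival probability $\mathbb P(X \ge u_k)$, so conditioning on the prefix yields the claimed identity.
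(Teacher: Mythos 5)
Your proof is correct, but it takes a genuinely different route from the paper. You recognise $\sum_{\bm{u} \in G(n)} W(\bm{u})|\bm{u}|$ as $\mathbb E[K_n]$, the expected first-passage time of a renewal process with i.i.d.\ increments distributed according to $L$: the identification $W(u_1\cdots u_k) = \mathbb P(X_1 = u_1, \ldots, X_{k-1} = u_{k-1}, X_k \ge u_k)$ follows from Lemma~\ref{l:weight}(ii)--(iii) and \eqref{extra} exactly as you say, the words of $G(n)$ of length $k$ are in bijection with the prefixes $(u_1,\ldots,u_{k-1})$ of sum less than $n$, and divergence then drops out of the soft fact that $\mathbb P(S_M < n) \to 1$ for each fixed $M$. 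The paper instead isolates the first digit to derive the recursion $\Delta_n = 1 + \sum_{d=1}^{n-1} L_d \Delta_{n-d}$ --- which is precisely the renewal equation for $\mathbb E[K_n]$, so the two arguments are studying the same object --- and then proves $\Delta_n \to \infty$ by a hands-on induction, first showing $(\Delta_n)$ is strictly increasing and then constructing auxiliary margins $\delta_m$ and thresholds $N_m$ with $\Delta_n > m$ for $n > N_m$. Your version buys brevity and a conceptual explanation of why the sum diverges (each fixed number of renewal steps covers only a bounded amount of depth in probability); the paper's version stays entirely combinatorial and self-contained, avoiding any probabilistic framework. One small presentational point: the cleanest way to state your final bound is $\sum_{k \ge 1} k\,p_{n,k} \ge (M+1)\sum_{k>M} p_{n,k} = (M+1)\,\mathbb P(S_M < n)$, making explicit that the right-hand side tends to $M+1$; this is what you have, just phrased through the complement.
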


\begin{proof}
For ease of notation set $\Delta_n = \sum_{\bm{u} \in G(n)} W(\bm{u}) |\bm{u}| $, $n \in \mathbb N$. Fix an $n \in \mathbb N$ and observe that for each $d\in \llbracket 1, n-1 \rrbracket$ it holds that
\[\{\bm{v} \in \mathcal A^*\, :\,  d \bm{v} \in G(n) \}= G(n-d).\]
By isolating the first digit in the words from $G(n)$ and using Lemma~\ref{l:weight} and \eqref{extra} we have
\begin{equation*}
	\begin{split}
		\Delta_n =\ & W(n) + \sum_{d=1}^{n-1} \sum_{\bm{u} \in G(n): \bm{u}=d\bm{v}} W(\bm{u}) |\bm{u}| \\
  =\ & W(n) + \sum_{d=1}^{n-1}  \sum_{\bm{v} \in G(n-d)}W(d1)W(\bm{v}) (|\bm{v}|+1) \\
					   =\ & W(n) + \sum_{d=1}^{n-1} L_d \sum_{\bm{v} \in G(n-d)}W(\bm{v}) (|\bm{v}|+1) \\
					   =\ & W(n) + \sum_{d=1}^{n-1} L_d \left( 1+ \sum_{\bm{v} \in G(n-d)}W(\bm{v}) |\bm{v}| \right) \\
					   =\ & W(n) + \sum_{d=1}^{n-1} L_d ( 1+ \Delta_{n-d}) \\
					  =\ & 1-\sum_{d=1}^{n-1} L_d +  \sum_{d=1}^{n-1} L_d  + \sum_{d=1}^{n-1} L_d \Delta_{n-d}  \\
					  =\ & 1 + \sum_{d=1}^{n-1} L_d \Delta_{n-d}.
	\end{split}
\end{equation*}
As $\Delta_1=1,$ we have $\Delta_2 > \Delta_1$.  Now let $N \in \mathbb N_{\geq 2},$ and suppose that $\Delta_{n} > \Delta_{n-1}$ for all $n \in \llbracket 2,N \rrbracket$. Then
\[ \Delta_{N+1} = 1+ \sum_{d=1}^N L_d \Delta_{N+1-d}  > 1 + \sum_{d=1}^{N-1} L_d \Delta_{N-d} + L_N\Delta_1  = \Delta_N + L_N \Delta_1 > \Delta_N.\]
Hence, the sequence $(\Delta_n)_{n \in \mathbb N}$ is strictly increasing and since $\Delta_2 >1$ this shows that there is a $\delta_1 \in (0,1)$ such that $\Delta_n \ge 1+ \delta_1$ for all $n \in \mathbb N_{\ge 2}$.

\medskip
As $\sum_{d \in \mathbb N} L_d =1$, we can find an $N_1 \in \mathbb N_{\ge 2}$ such that $\sum_{d=1}^{N_1-1} L_d + \frac{1}{2}\delta_1 \geq 1$. This leads to
\[ \begin{split}
\Delta_{N_1+1}  & > 1 + \sum_{d=1}^{N_1-1} L_d \Delta_{N_1+1-d} \\
& \geq 1 + (1+\delta_1) \sum_{d=1}^{N_1-1} L_d \\			
& \geq 1 +  (1+\delta_1) \left(1-\frac{1}{2}\delta_1\right) = 2 + \frac{1}{2} (\delta_1 - \delta_1^2). \\
\end{split}\]
Set $\delta_2 =\frac12 ( \delta_1 - \delta_1^2) \in (0,1)$. Then this shows that $\Delta_n \geq 2 + \delta_2$ for all $n\ge N_1+1$. Continuing in this manner, inductively set
\[\delta_m=\frac{1}{m} (\delta_{m-1} - \delta_{m-1}^2), \quad m \in \mathbb N_{\geq 2}.\] 
Let $M \in \mathbb N_{\ge 2}$ and assume that there exist integers $2 \le N_1 < N_2 < \cdots < N_{M-1}$ such that $\Delta_n \geq m+1 + \delta_{m+1}$ for all $n \ge  N_m + 1$, $1 \le m < M$.  Since $\delta_m \in (0,1)$ for each $m \in \mathbb N$, there exists an $N_{M} > N_{M-1}$ such that 
\[\sum_{d=1}^{N_{M}-N_{M-1}} L_d + \frac{1}{M+1}\delta_{M} \geq 1.\]
This yields 
\[\begin{split}
\Delta_{N_{M}+1}  & >  1 + \sum_{d=1}^{N_{M}-N_{M-1}}  L_d \Delta_{N_{M}+1-d} \\
& \geq 1+   \sum_{d=1}^{N_{M}-N_{M-1}} L_d (M+\delta_{M}) \\	
& \geq 1+ (M+\delta_M) \left(1-\frac1{M+1}\delta_{M}\right) \\				
& = M+1 + \frac{1}{M+1} (\delta_{M} - \delta_{M}^2)\\
& = M+1 + \delta_{M+1}.
\end{split}\]
Hence, we can find a strictly increasing sequence of integers $(N_m)_{m \in \mathbb N}$ with the property that for each $m \in \mathbb N$ we have $\Delta_n > m$ for all $n > N_m$. Thus $\lim_{n \to \infty} \Delta_n = \infty$. 
\end{proof}

\section{$L$-normality}\label{s:normal}
In this section we introduce a set of sequences $A \in \mathcal A^\mathbb N$ for which we prove that they are $L$-normal. These sequences will arise from concatenating the words in the nodes of the $L$-tree depth by depth with a certain multiplicity. We will therefore introduce some notation to regard a sequence of digits in $\mathcal A^\mathbb N$ as a sequence of words in $(\mathcal A^*)^\mathbb N$ as well.

\subsection{Normality of concatenation sequences}
A {\em concatenation map} is a map $c: \mathbb N \to \mathbb N$ that satisfies
\begin{itemize}
\item[\scriptsize $\bullet$] $c(1)=1$,
\item[\scriptsize $\bullet$] $c(i+1)-c(i) \in \{0,1\}$ for all $i \in \mathbb N$,
\item[\scriptsize $\bullet$] $\# \{ i \, : \, c(i)=k \} < \infty$ for each $k \in \mathbb N$.
\end{itemize}
It can be used to split a sequence $A= (a_i)_{i \in \mathbb N} \in \mathcal A^\mathbb N$ into a sequence of words $(\bm{u}_j)_{j \in \mathbb N} \in (\mathcal A^*)^\mathbb N$ by specifying which digits $a_i$ belong to the same word: for each $j \in \mathbb N$ there is an $m(j) = \min\{ i \in \mathbb N \, : \, c(i)=j\}$ and an $M(j) = \max\{ i \in \mathbb N \, : \, c(i)=j\}$, such that $\{ i \in \mathbb N \, : \, c(i)=j\} = \llbracket m(j),M(j) \rrbracket$. Hence, to each $A \in \mathcal A^\mathbb N$ and concatenation map $c$ we can associate a unique sequence $A_c = (\bm{u}_j)_{j \in \mathbb N}$ with $\bm{u}_j = a_{m(j)}\ldots a_{M(j)} \in \mathcal A^*$ for each $j \in \mathbb N$. We refer to the sequence $A_c$ as the {\em concatenation sequence} associated to the pair $(A,c)$. 

\vskip .2cm
For a constant $K_1 \in \mathbb R_{>1}$, we call a sequence $A \in \mathcal A^\mathbb N$ an {\em $(L,K_1)$-tree sequence} if there is a concatenation map $c:\mathbb N \to \mathbb N$ such that the concatenation sequence $A_c = (\bm{u}_j)_{j \in \mathbb N}$ satisfies the following two properties.
\begin{enumerate}
\item[(P1)] For all $n \geq 1$, 
\[ \left\{ j \in \mathbb N \, : \, \bm{u}_j \in G(n) \right\} = \left\llbracket \sum_{m=1}^{n-1} m! +1, \sum_{m=1}^n m!\right\rrbracket.\]
\item[(P2)] For each $n \geq 1$ and $\beta \in G(n)$ there is an error $e_\beta \in (-K_1,K_1)$ such that
\[ \#\left\{1 \le j \leq \sum_{m=1}^n m! \, :\,  \bm{u}_j=\beta  \right\} = n! W(\beta) + e_{\beta}.\]
\end{enumerate}
In other words, (P1) says that the sequence $A_c$ contains precisely $n!$ words of depth $n$ and that all of these words appear before any word of depth $n+1$ appears. (P2) implies that for each $n \in \mathbb N$ and $\beta \in G(n)$ the number of times that $\beta$ occurs in $A_c$ deviates from $n!W(\beta)$ by at most a constant. To show that we can construct sequences satisfying (P2), note that for each $n \in \mathbb N$ and $\beta \in G(n)$ we can start by adding $\lfloor n!W(\beta) \rfloor$ copies of the word $\beta$. Since by Lemma~\ref{l:weight}(i)
\[ n! - 2^n \le \sum_{\beta \in G(n)} \lfloor n!W(\beta) \rfloor \le \sum_{\beta \in G(n)} n! W(\beta) =n!, \]
and $\# G(n)=2^n$, we can achieve that $e_\beta \in [-1,1]$ for each $\beta \in G(n)$ by adding at most $2^n$ words $\beta \in G(n)$ one additional time. Hence, for any constant $K_1 > 1$ sequences satisfying (P1) and (P2) can be constructed.

\medskip
For constants $K_1 \in \mathbb R_{> 1}$ and $K_2 \in \mathbb R_{> 4}$ we call $A \in \mathcal A^\mathbb N$ an {\em $(L,K_1,K_2)$-tree sequence} if it is an $(L,K_1)$-tree sequence that additionally satisfies the following property.
\begin{enumerate}
\item[(P3)] For each $n \in \mathbb N_{\ge 3}, k \in \llbracket 1,n(n-1) \rrbracket$ and $\beta \in G(n-2)$, there is an error $e_\beta^k \in (-K_2,K_2)$ such that
\[ \begin{split}
 \#\Big\{ j \in \Big\llbracket \sum_{m=1}^{n-1} m! + (n-2)!(k-1) +1, \sum_{m=1}^{n-1} m! + (n-2)! k \Big\rrbracket\, :\,  & \, \bm{u}_j^{-2} =\beta\Big\}\\
 = \ & (n-2)!W(\beta) + e^k_{\beta}.
 \end{split}\]
\end{enumerate}
Property (P3) says that if we were to split the words of depth $n$ in $A_c$ into $n(n-1)$ groups of $(n-2)!$ words in order of appearance, then in each of these groups the words appear in such a way that their grandparents satisfy (P2) with error margin $K_2$ instead of $K_1$. Note that this is independent of how the words from $G(n-2)$ in the sequence $A_c$ were actually chosen. In each group the division of the words over the grandparents can be different as long as (P2) is respected with error margin $K_2$.

\medskip
To show that it is possible to achieve (P1), (P2) and (P3) simultaneously, fix an $n \ge 3$ and a $\beta \in G(n-2)$. To satisfy (P1) and (P2) we proceed as above and add each word  $\bm{u} \in \{ \beta11, \beta2, \beta^+1, (\beta^+)^+\}$ either $\lfloor W(\bm{u})n! \rfloor$ or $\lceil W(\bm{u})n! \rceil$ times to $A_c$ so that we add a total of $n!$ words from $G(n)$. Note that
\[ n(n-1)\lfloor W(\bm{u})(n-2)! \rfloor \le \lfloor W(\bm{u})n! \rfloor \le \lceil W(\bm{u})n! \rceil \le n(n-1)\lceil W(\bm{u})(n-2)! \rceil .\]
Since
\[ \sum_{\bm{u}\in G(n)\, : \, \bm{u}^{-2}=\beta} \lfloor (n-2)!W(\bm{u})\rfloor > \sum_{\bm{u}\in G(n)\, : \, \bm{u}^{-2}=\beta} ((n-2)!W(\bm{u})-1) = (n-2)!W(\beta)-4,
\]
and similarly
\[ \sum_{\bm{u}\in G(n)\, : \, \bm{u}^{-2}=\beta} \lceil (n-2)!W(\bm{u})\rceil < (n-2)!W(\beta)+4,
\]
we can spread the available words from $\{ \beta11, \beta2, \beta^+1, (\beta^+)^+\}$ across the $n(n-1)$ groups of $(n-2)!$ words in such a way that the number of words with grandparent $\beta$ in each group lies between $(n-2)!W(\beta)-4$ and $(n-2)!W(\beta)+4$. This shows that for any $K_1 \in \mathbb R_{> 1}$ and $K_2 \in \mathbb R_{> 4}$ we can construct an $(L,K_1,K_2)$-tree sequence.

\medskip
For $A\in \mathcal A^\mathbb N$, a concatenation map $c$ and a word $\alpha \in \mathcal A^*$, let
\begin{equation}\label{q:acalpha}
A_c(\alpha) = \{ j \in \mathbb N \, : \, \bm{u}_j =\alpha \}
\end{equation}
and for each $\alpha \in \mathcal A^*$ and $n \ge 1$ let
\begin{equation}\label{q:acalphan} A_c(\alpha,n) = \{ j \in \mathbb N \, : \, \bm{u}_j \in G(n) \text{ and } \bm{u}_j = \alpha \gamma \text{ for some } \gamma \in \mathcal A^*\}
\end{equation}
be the set of indices of words in $A_c$ that have depth $n$ and have $\alpha$ as a prefix. The next proposition on $(L,K_1)$-tree sequences $A$ states that for any $\alpha \in \mathcal A^*$ the proportion of words in $A_c$ that have $\alpha$ as a prefix asymptotically equals $\mu(\alpha)$ and in that sense the lemma indicates a form of $L$-normality of the concatenation sequence $A_c$.

\begin{prop}\label{genwoord}
Fix some $K_1 \in \mathbb R_{> 1}$.  Let $\alpha \in \mathcal A^*$ and $\varepsilon >0$. Then there is an $N \in \mathbb N$ such that for all $n \ge N$ and any $(L,K_1)$-tree sequence $A \in \mathcal A^\mathbb N$,
\[\left| \frac{\#A_c(\alpha,n)}{n!} - \mu(\alpha) \right| < \varepsilon.\]
\end{prop}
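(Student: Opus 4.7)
The plan is to use (P1) to localise all words of depth $n$ inside a single contiguous block of $A_c$, use (P2) to count each such word with a bounded error, and then use Lemma~\ref{l:weight} to collapse the resulting weighted sum to $\mu(\alpha)$. Set $d = D(\alpha)$ and consider $n > d$ (for $n \le d$, no word of depth $n$ properly extends $\alpha$, so $\#A_c(\alpha,n) = 0$, and I would simply require $N > d$).

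First I would observe that, by the additivity \eqref{q:Ds} of the depth function, the words of depth $n$ of the form $\alpha\gamma$ with $\gamma \in \mathcal A^*$ correspond bijectively to $\gamma \in G(n-d)$: indeed $D(\alpha\gamma) = d + D(\gamma)$ equals $n$ iff $D(\gamma) = n-d$, and such a $\gamma$ is automatically non-empty. Using the notation \eqref{q:acalpha} this yields
\begin{equation*}
\#A_c(\alpha,n) = \sum_{\gamma \in G(n-d)} \#A_c(\alpha\gamma).
\end{equation*}
By (P1) all indices $j$ with $\bm{u}_j \in G(n)$ lie in $\llbracket \sum_{m=1}^{n-1} m! + 1, \sum_{m=1}^n m! \rrbracket$, so (P2) applies and gives $\#A_c(\alpha\gamma) = n!\,W(\alpha\gamma) + e_{\alpha\gamma}$ with $|e_{\alpha\gamma}| < K_1$.

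Next I would use Lemma~\ref{l:weight}(ii) and (iii) to factor $W(\alpha\gamma) = W(\alpha 1)\,W(\gamma) = \mu(\alpha)\,W(\gamma)$, and Lemma~\ref{l:weight}(i) to evaluate $\sum_{\gamma \in G(n-d)} W(\gamma) = 1$. Combining these identities gives
\begin{equation*}
\#A_c(\alpha,n) \;=\; n!\,\mu(\alpha) + \sum_{\gamma \in G(n-d)} e_{\alpha\gamma},
\end{equation*}
and therefore the uniform error bound
\begin{equation*}
\left| \frac{\#A_c(\alpha,n)}{n!} - \mu(\alpha) \right| \;\le\; \frac{K_1 \cdot \#G(n-d)}{n!} \;\le\; \frac{K_1 \cdot 2^{n-d}}{n!}.
\end{equation*}

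Since $2^{n-d}/n! \to 0$ as $n \to \infty$, I would pick $N > d$ so that the right-hand side stays below $\varepsilon$ for all $n \ge N$; this completes the proof. The key point for the uniformity in the statement is that this final upper bound depends only on $\alpha$, $K_1$ and $\varepsilon$, not on the particular $(L,K_1)$-tree sequence $A$ (the errors $e_{\alpha\gamma}$ may vary with $A$, but the bound $|e_{\alpha\gamma}| < K_1$ does not). I do not expect any serious obstacle: the argument is a direct bookkeeping synthesis of (P1), (P2), and the three parts of Lemma~\ref{l:weight}, and the only conceptual content is the observation that the exponential growth of $\#G(n-d)$ is dwarfed by the factorial $n!$ that comes from (P1).
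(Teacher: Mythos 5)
Your proposal is correct and follows essentially the same route as the paper's proof: decompose $\#A_c(\alpha,n)$ over $\gamma \in G(n-D(\alpha))$ via \eqref{q:Ds}, apply (P1) and (P2) to each $\#A_c(\alpha\gamma)$, collapse the weights with Lemma~\ref{l:weight}, and absorb the accumulated errors using $2^n/n! \to 0$. No gaps; the uniformity observation at the end matches the paper's intent exactly.
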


\begin{proof}
Note that by \eqref{q:Ds} it follows that if $\bm{u}_j=\alpha \gamma \in G(n)$, then $D(\gamma) = n-D(\alpha)$. Hence, for each $n > D(\alpha)$ and any $L$-tree sequence $A$ this yields 
\[ \#A_c(\alpha,n) = \sum_{\gamma \in G(n-D(\alpha))} \#A_c(\alpha\gamma). \]
Let $A$ be any $L$-tree sequence. By (P1) we know that all $j$ with $\bm{u}_j = \alpha \gamma \in G(n)$ satisfy $ j \in \llbracket \sum_{m=1}^{n-1} m! + 1, \sum_{m=1}^n m! \rrbracket$. Then from (P2) and Lemma~\ref{l:weight} we obtain
\begin{equation}\label{q:inlemma24}
\begin{split}
\#A_c(\alpha,n)  =\ & \sum_{\gamma \in G(n-D(\alpha))} \#A_c(\alpha\gamma) \\
=\ & \sum_{\gamma \in G(n-D(\alpha))} (n!W(\alpha\gamma) + e_{\alpha\gamma}) \\
=\ & n!  W(\alpha1) \sum_{\gamma \in G(n-D(\alpha))} W(\gamma)+\sum_{\gamma \in G(n-D(\alpha))} e_{\alpha\gamma}  \\
=\ & n! \mu(\alpha) +\sum_{\gamma \in G(n-D(\alpha))} e_{\alpha\gamma} .
\end{split}
\end{equation}
Since 
\begin{equation*}\
\sum_{\gamma \in G(n-D(\alpha))} e_{\alpha\gamma} \in (-K_12^n,K_12^n),
\end{equation*}
and
\[\lim_{n \to \infty} \frac{2^n}{n!} = 0,\]
the result follows.
\end{proof}

\subsection{Normality along the depth subsequence}
For each $n \in \mathbb N$ a word $\bm{u} \in G(n)$ has length $|\bm{u}| \in \llbracket 1, n\rrbracket$. In this subsection we first study for $(L,K_1)$-tree sequences $A \in \mathcal A^\mathbb N$ the average word length of words in $A_c$, counted with multiplicity. For each $n \in \mathbb N$ write
\[ d(n) :=  \sum_{j: \bm{u}_j \in G(n)} |\bm{u}_j|\]
for the total number of digits in $A$ that belong to words from $G(n)$. From (P2) we see that
\[ d(n) =  n!  \sum_{\bm{u} \in G(n)} W(\bm{u}) |\bm{u}|+ \sum_{\bm{u} \in G(n)}e_{\bm{u}}|\bm{u}|.\]
If we put $\hat e_n := \sum_{\bm{u} \in G(n)}e_{\bm{u}}|\bm{u}| \in (-nK_1 2^n,nK_12^n)$, then
\begin{equation}\label{ster}
\frac{d(n)+\hat e_n}{n!} =  \sum_{\bm{u} \in G(n)} W(\bm{u}) |\bm{u}|.
\end{equation}
Note that the different values of $d(n)$ for different $(L,K_1)$-tree sequences $A$ are captured by the corresponding $\hat e_n$. Since $\lim_{n \to \infty} \frac{n2^n}{n!}  =0$, it follows from Lemma~\ref{l:unbounded} that 
 $\lim_{n \to \infty} \frac{d(n)}{n!} = \infty$ uniformly in $A$ in the sense that if we fix a $K_1 \in \mathbb R_{>1}$, then for any $M \in \mathbb N$ there is an $N \in \mathbb N$ such that for all $(L,K_1)$-tree sequences $A \in \mathcal A^\mathbb N$ and all $n \ge N$,
 \begin{equation}\label{q:dnn!}
 \frac{d(n)}{n!} > M.
 \end{equation}

\medskip
By (P1), the total number of words in $A_c$ that have depth at most $n$ is
\[ \# \left\{ j \in \mathbb N \, : \, \bm{u}_j \in \bigcup_{m=1}^n G(n) \right\} = \sum_{m=1}^n m!.\]
For $n \ge 1,$ set $d^*(n) = \sum_{m=1}^n d(m)$. The cumulative average number of digits per word in $A_c$ up to depth $n$ is then given by
\[ \frac{d^*(n)}{\sum_{m=1}^n m!}.\]
Observe that for each $n \in \mathbb N$ it holds that $n!= n \cdot (n-1)! \geq \sum_{m=1}^{n-1} m!,$ thus
\[ \frac{d^*(n)}{\sum_{m=1}^n m!} \ge \frac{d(n)}{2n!}.\]
From \eqref{q:dnn!} we then obtain that in the limit this ratio is unbounded, which is stated in the following lemma.

\begin{lemma}\label{gengemiddelde} Fix a $K_1 \in \mathbb R_{>1}$. For any $M \in \mathbb N$ there is an $N \in \mathbb N$ such that for all $(L,K_1)$-tree sequences $A \in \mathcal A^\mathbb N$ and all $n \ge N$,
\[ \frac{d^*(n)}{\sum_{m=1}^n m!}>M.\]
\end{lemma}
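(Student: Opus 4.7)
The plan is to observe that this lemma is essentially a repackaging of equation \eqref{q:dnn!} combined with the elementary inequality $\sum_{m=1}^n m!\le 2\cdot n!$; indeed the discussion in the paragraph immediately preceding the lemma does most of the work, and I would simply need to record these two ingredients cleanly.

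First I would note the trivial bound $d^*(n)\ge d(n)$, since $d^*(n)=\sum_{m=1}^n d(m)$ is a sum of nonnegative terms of which $d(n)$ is one. Next, the text has already pointed out that $n!=n(n-1)!\ge \sum_{m=1}^{n-1}m!$ (which follows because each $m!$ with $m\le n-1$ is bounded by $(n-1)!$, and there are $n-1$ such terms). Consequently $\sum_{m=1}^n m! \le 2\cdot n!$, which combined with the previous bound yields
\[ \frac{d^*(n)}{\sum_{m=1}^n m!} \;\ge\; \frac{d(n)}{2\cdot n!}. \]

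Finally I would invoke \eqref{q:dnn!} with $2M$ in place of $M$: uniformly over $(L,K_1)$-tree sequences $A$, there exists an $N\in\mathbb N$ such that $d(n)/n!>2M$ for all $n\ge N$, and hence $d^*(n)/\sum_{m=1}^n m!>M$ for all such $n$, as required. The uniformity in $A$ transfers directly from that of \eqref{q:dnn!}, since neither the bound $d^*(n)\ge d(n)$ nor the inequality on the factorial sum depends on $A$.

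There is no genuine obstacle here: the nontrivial content has already been absorbed into Lemma~\ref{l:unbounded} (which ensures $\sum_{\bm u\in G(n)}W(\bm u)|\bm u|\to\infty$) and into the observation that the error term $\hat e_n=O(n2^n)$ is negligible compared with $n!$. The only thing this lemma adds is a change from the partial sum $d(n)/n!$ to the cumulative ratio $d^*(n)/\sum_{m=1}^n m!$, and the factorial-sum domination $\sum_{m=1}^n m!\le 2\cdot n!$ makes this change cost only a harmless factor of $2$.
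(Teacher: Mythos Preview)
Your proposal is correct and follows exactly the approach indicated in the paragraph preceding the lemma in the paper: use $d^*(n)\ge d(n)$ together with $\sum_{m=1}^n m!\le 2\cdot n!$ to get $\frac{d^*(n)}{\sum_{m=1}^n m!}\ge \frac{d(n)}{2n!}$, and then apply \eqref{q:dnn!}. The only cosmetic difference is that you invoke \eqref{q:dnn!} with $2M$ explicitly, whereas the paper leaves this implicit.
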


Given a word $\alpha \in \mathcal A^*$, instead of computing the proportion of indices $i$ that mark the start of the occurrence of the words $\alpha$ in $A$, we start by concerning ourselves with the occurrences of the word $\alpha$ as a subword of the words $\bm{u}_j$ of $A_c$. We introduce some further notation, basically to transfer the notation introduced above for the elements $\bm{u}_j$ of the sequence $A_c$ to the elements $a_i$ of the sequence $A$.

\medskip
For each $i \in \mathbb N$ let $m_i$ and $M_i$ be the least and greatest index for which $c(i)=c(m_i)=c(M_i)$, respectively, and set
\[ \begin{split} 
		p_c(i)&:=a_{m_i}...a_{i-1} \in \mathcal A^*_\epsilon, \\
		s_c(i)&:=a_i...a_{M_i} \in \mathcal A^*.
\end{split}\]
So, $\bm{u}_{c(i)} = p_c(i)s_c(i)$. Write $D_c(i) = D(\bm{u}_{c(i)})$ for the depth of the word to which the digit $a_i$ belongs. Note that a word $\alpha \in \mathcal A^*$ can only occur as a subword of a word $\bm{u}_j$ if $D(\bm{u}_j) \ge D(\alpha)$. For each $1 \le  l \le n$ let
\[ \begin{split} 
U_A(n,l):=\ & \{i \in \mathbb N\, :\,  D_c(i)=n, \, D(p_c(i)) < n - l \}\\
=\ & \bigcup_{\substack{\beta \in \mathcal A^*_\epsilon: \\ D(\beta)<n-l}} \{i \in \mathbb N\, :\,  D_c(i)=n, \, p_c(i)=\beta\}
\end{split}\]
be the set of those indices $i$ that correspond to words of depth $n$ and are such that the word $s_c(i)$, starting at index $i$, has depth at least $l$. If we take $l=D(\alpha)$ for some $\alpha \in \mathcal A^*$, then $U_A(n,D(\alpha))$ contains those indices $i$ for which it is possible for $\alpha$ to occur as a subword of $\bm{u}_{c(i)}$ at the position of the digit $a_i$. The set of those indices $i$ for which this actually happens is given by the following set. For $\alpha \in \mathcal A^*$ let $\mathcal A(\alpha)$ be as defined in \eqref{q:cylinder} and let
\[ \begin{split}
U_{A, \alpha}(n):=\ & \{ i \in U_A(n, D(\alpha))\, :\,  s_c(i) \in \mathcal A(\alpha)\}\\
=\ & \bigcup_{\substack{\beta \in \mathcal A^*_\epsilon : \\ D(\beta)<n-D(\alpha)}}\{i \in \mathbb N\, : \,  D_c(i)=n, \, p_c(i)=\beta, \,  s_c(i) \in \mathcal A(\alpha). \}
\end{split}\]
The next lemma states that among the those indices $i$ that still allow enough room within their respective words $\bm{u}_{c(i)}$ for $\alpha$ to appear, i.e., indices $i \in U_A(n,D(\alpha))$, the proportion of indices $i$ where $\alpha$ actually occurs as a subword of $\bm{u}_{c(i)}$ at the position of $a_i$ tends to $\mu(\alpha)$ as $n \to \infty$.

\begin{lemma}\label{gencarol}
Fix a $K_1 \in \mathbb R_{>1}$. Let $\alpha \in \mathcal A^*$ and $\varepsilon >0$. Then there is an $N \in \mathbb N$ such that for any $(L,K_1)$-tree sequence $A \in \mathcal A^\mathbb N$ and all $n \ge N$,
\[ | N_A (\alpha, U_A(n,D(\alpha))) - \mu(\alpha) | < \varepsilon.\]
\end{lemma}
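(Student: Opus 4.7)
The plan is to estimate $\#U_A(n, D(\alpha))$ and $\#U_{A,\alpha}(n)$ by conditioning on the value of $p_c(i)$, reducing the counts to sums over words in $G(n)$ controlled by property (P2) and Lemma~\ref{l:weight}, and then to invoke Lemma~\ref{l:unbounded} to conclude that the leading term dominates the error.

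First I would decompose: for each $\beta \in \mathcal A^*_\epsilon$ with $D(\beta) < n - D(\alpha)$, the indices $i$ satisfying $D_c(i) = n$ and $p_c(i) = \beta$ correspond bijectively to occurrences in $A_c$ of words $\bm{u}_j \in G(n)$ that have $\beta$ as a proper prefix, i.e., $\bm{u}_j = \beta \delta$ for some $\delta \in G(n - D(\beta))$. Using (P2) together with $W(\beta\delta) = W(\beta 1) W(\delta) = \mu(\beta) W(\delta)$ and $\sum_{\delta \in G(k)} W(\delta) = 1$ from Lemma~\ref{l:weight},
\[
\sum_{\delta \in G(n - D(\beta))} \#A_c(\beta \delta) = n!\,\mu(\beta) + E_\beta, \qquad |E_\beta| < K_1 \cdot \#G(n - D(\beta)).
\]
An analogous computation, now using that $i \in U_{A,\alpha}(n)$ precisely when $\bm{u}_{c(i)} = \beta \alpha \gamma$ for some $\gamma \in G(n - D(\beta) - D(\alpha))$, yields
\[
\sum_{\gamma} \#A_c(\beta\alpha\gamma) = n!\,\mu(\beta)\,\mu(\alpha) + E_\beta^\alpha, \qquad |E_\beta^\alpha| < K_1 \cdot \#G(n - D(\beta) - D(\alpha)).
\]
Summing over admissible $\beta$ then gives
\[
\#U_A(n, D(\alpha)) = n!\,S_n + \mathcal E_n, \qquad \#U_{A,\alpha}(n) = n!\,\mu(\alpha)\,S_n + \mathcal E_n',
\]
where $S_n := \sum_{\beta \in \mathcal A^*_\epsilon,\, D(\beta) < n - D(\alpha)} \mu(\beta)$. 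Since $\#G(k) = 2^{k-1}$ and the number of $\beta$ with $D(\beta)=k$ is also at most $2^{k-1}$, both error totals $\mathcal E_n, \mathcal E_n'$ are $O(n\,2^n)$, uniformly in $A$.

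The crux is recognising that $S_n$ is exactly the quantity handled by Lemma~\ref{l:unbounded}. Writing $|\bm{u}|$ as the number of proper prefixes of $\bm{u}$ (including $\epsilon$) and swapping summation order,
\[
\sum_{\bm{u} \in G(n - D(\alpha))} W(\bm{u})\,|\bm{u}| = \sum_{\beta:\, D(\beta) < n - D(\alpha)}\ \sum_{\delta \in G(n - D(\alpha) - D(\beta))} W(\beta \delta) = S_n.
\]
Hence $S_n \to \infty$, while $n\,2^n / n! \to 0$; consequently
\[
N_A(\alpha, U_A(n, D(\alpha))) = \frac{\mu(\alpha)\,S_n + \mathcal E_n'/n!}{S_n + \mathcal E_n / n!} \longrightarrow \mu(\alpha),
\]
at a rate depending only on $L$, $\alpha$, and $K_1$, which gives the required $N$ independent of $A$. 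The main obstacle is exactly this combinatorial double-counting that links $S_n$ to $\sum_{\bm{u} \in G(n - D(\alpha))} W(\bm{u})|\bm{u}|$; without it one has no way to see that the denominator $S_n$ outgrows the error terms $\mathcal E_n/n!$, which is precisely what forces the ratio to converge.
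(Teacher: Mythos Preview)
Your decomposition and counting are exactly the paper's: both arrive at $\#U_A(n,D(\alpha)) = n!\,S_n + \mathcal E_n$ and $\#U_{A,\alpha}(n) = n!\,\mu(\alpha)\,S_n + \mathcal E'_n$ with $S_n = \sum_{\beta:\,D(\beta)<n-D(\alpha)} \mu(\beta)$, and both bound the errors by a quantity that is $o(n!)$ uniformly in $A$ (you get the sharper $O(n2^n)$, the paper is content with $O(4^n)$).

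The one real difference is at the end. You identify $S_n$ with $\sum_{\bm u\in G(n-D(\alpha))} W(\bm u)|\bm u|$ via the prefix double-count and then invoke Lemma~\ref{l:unbounded} to get $S_n\to\infty$. This is correct and a nice identity, but it is not needed here: since $\epsilon$ is among the admissible $\beta$ once $n>D(\alpha)$, one has $S_n\ge \mu(\epsilon)=1$ (the paper uses the cruder $S_n\ge L_1$), and together with $\mathcal E_n/n!\to 0$ this already forces the ratio to $\mu(\alpha)$. So what you call ``the crux'' and ``the main obstacle'' is in fact bypassed in the paper by a one-line lower bound; Lemma~\ref{l:unbounded} only enters later, in Lemma~\ref{genverhouding}, where divergence of the average word length is genuinely required.
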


\begin{proof}
For $\alpha, \beta \in \mathcal A^*$ and any $n \in \mathbb N$ let
\[ \begin{split} 
A(\beta,n) &= \{i \in \mathbb N: p_c(i)=\beta,  D_c(i)=n \}, \\
A(\beta,  \alpha,  n) &= \{i \in \mathbb N\, :\,  p_c(i)=\beta, \,  s_c(i) \in \mathcal A(\alpha),  \, D_c(i)=n \},
\end{split}\]
so that 
\[  U_A(n,l) =\bigcup_{\substack{\beta \in \mathcal A^*_\epsilon : \\ D(\beta)<n-l}} A(\beta,n) \quad \text{ and } \quad
U_{A, \alpha }(n) =\bigcup_{\substack{\beta \in \mathcal A^*_\epsilon: \\ D(\beta)<n-D(\alpha)}}A(\beta,\alpha,n).\]
Recall the definition of $A_c(\beta,n)$ from \eqref{q:acalphan} and note that to each $j \in A_c(\beta,n)$ there corresponds a unique $i \in A(\beta,n)$ with $c(i)=j$ and vice versa. Hence
\[  \# A(\beta,n) = \# A_c(\beta,n),\]
and similarly,
\[ \# A(\beta,\alpha, n) =  \# A_c(\beta\alpha,n) .\]
Note that both families of sets $\{U_A(n,D(\alpha))\}_{n \in \mathbb N}$ and $\{U_{A, \alpha}(n)\}_{n \in \mathbb N}$ are pairwise disjoint. Then for any $n > D(\alpha)$ by \eqref{q:inlemma24} it follows that
\begin{equation*}
\begin{split}
N_A (\alpha, U_A(n,D(\alpha)))&= \frac{\#U_{A, \alpha}(n)}{\#U_A(n,D(\alpha))} = \frac{\sum_{\substack{\beta \in \mathcal A^*_\epsilon : \\ D(\beta)<n-D(\alpha)}} \#A(\beta,\alpha,n)}{\sum_{\substack{\beta \in \mathcal A^*_\epsilon: \\ D(\beta)<n-D(\alpha)}}\#A(\beta,n)}\\
& =\frac{\sum_{\substack{\beta \in \mathcal A^*_\epsilon : \\ D(\beta)<n-D(\alpha)}}  \left( n! \cdot \mu(\beta \alpha)+ \sum_{\gamma \in G(n-D(\beta\alpha))}e_{\beta \alpha \gamma} \right)  }{\sum_{\substack{\beta \in \mathcal A^*_\epsilon : \\ D(\beta)<n-D(\alpha)}}\left(n! \cdot \mu(\beta)+\sum_{\gamma \in G(n-D(\beta))}e_{\beta\gamma}\right)}\\
& =\mu(\alpha) \frac{\sum_{\substack{\beta \in \mathcal A^*_\epsilon : \\ D(\beta)<n-D(\alpha)}}  \left( \mu(\beta)+ \frac1{\mu(\alpha)n!}\sum_{\gamma \in G(n-D(\beta\alpha))}e_{\beta \alpha \gamma} \right)  }{\sum_{\substack{\beta \in \mathcal A^*_\epsilon : \\ D(\beta)<n-D(\alpha)}}\left(\mu(\beta)+\frac1{n!}\sum_{\gamma \in G(n-D(\beta))}e_{\beta\gamma}\right)}.
	\end{split}
\end{equation*}
Note that for any $n > D(\alpha)$,
\[ 0< L_1 = \mu(1)  \le \sum_{\substack{\beta \in \mathcal A^*_\epsilon : \\ D(\beta)<n-D(\alpha)}}  \mu(\beta) \le n,\]
and
\[\sum_{\substack{\beta \in \mathcal A^*_\epsilon: \\ D(\beta)<n-D(\alpha)}} \sum_{\gamma \in G(n-D(\beta\alpha))} e_{\beta \alpha \gamma}, \sum_{\substack{\beta \in \mathcal A^*_\epsilon: \\ D(\beta)<n-D(\alpha)}} \sum_{\gamma \in G(n-D(\beta))}e_{\beta\gamma} \in (-K_1^24^n,K_1^24^n).\]
Therefore,
\[ \frac{\sum_{\substack{\beta \in \mathcal A^*_\epsilon : \\ D(\beta)<n-D(\alpha)}}  \left( \mu(\beta)+ \frac1{\mu(\alpha)n!}\sum_{\gamma \in G(n-D(\beta\alpha))}e_{\beta \alpha \gamma} \right)  }{\sum_{\substack{\beta \in \mathcal A^*_\epsilon : \\ D(\beta)<n-D(\alpha)}}\left(\mu(\beta)+\frac1{n!}\sum_{\gamma \in G(n-D(\beta))}e_{\beta\gamma}\right)} \in \left(  \frac{1-\frac1{\mu(\alpha)}\frac{K_1^24^n}{n! L_1}}{1+ \frac{K_1^24^n}{n!L_1}},  \frac{1+\frac1{\mu(\alpha)}\frac{K_1^24^n}{n! L_1}}{1- \frac{K_1^24^n}{n!L_1}} \right),
\]
and the result follows.
\end{proof}


For any $n,l \in \mathbb N$ set $U_A^*(n,l) = \bigcup_{m=1}^n U_A(m,l)$. The next step is to extend Lemma~\ref{gencarol} to $U_A^*(n,D(\alpha))$. 

\begin{prop}\label{p:NUstar}
Fix $K_1 \in \mathbb R_{>1}$. Let $\alpha \in \mathcal A^*$ and $\varepsilon \in (0, \mu(\alpha))$. Then there is an $N \in \mathbb N$ such that for all $(L,K_1)$-tree sequences $A \in \mathcal A^\mathbb N$ and all $n \ge N$,
\[ | N_A\left(\alpha, U_A^*(n,D(\alpha))\right) - \mu(\alpha)| < \varepsilon.\]
\end{prop}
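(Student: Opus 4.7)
The plan is to reduce to Lemma~\ref{gencarol} by a head-tail split at a truncation level $M$: the contribution of depths $m\ge M$ is handled by Lemma~\ref{gencarol} uniformly in $A$, while the head becomes negligible because the denominator grows without bound uniformly in $A$. Since the values $D_c(i)$ partition $\mathbb N$ by depth, the sets $U_A(m,D(\alpha))$ (and likewise $U_{A,\alpha}(m)$) are pairwise disjoint, so
\[ N_A\bigl(\alpha,U_A^*(n,D(\alpha))\bigr) \;=\; \frac{\sum_{m=1}^n \#U_{A,\alpha}(m)}{\sum_{m=1}^n \#U_A(m,D(\alpha))} \;=:\; \frac{N_n}{T_n}.\]

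For uniform control of the denominator, observe that within each word $\bm{u}_j\in G(m)$ there are at most $D(\alpha)$ indices $i$ with $D(s_c(i))\le D(\alpha)$, because the suffix depths $D(s_c(i))$ along the positions of $\bm{u}_j$ are distinct positive integers (the suffix depth strictly decreases as the starting position advances). Combined with the count $\#\{j:\bm{u}_j\in G(m)\}=m!$ from (P1), this yields
\[ d(m)-D(\alpha)\cdot m! \;\le\; \#U_A(m,D(\alpha)) \;\le\; d(m).\]
Moreover $|\bm{u}|\le D(\bm{u})=m$ for $\bm{u}\in G(m)$, so (P1) also gives $d(m)\le m\cdot m!$ independently of $A$; hence $\sum_{m<M}d(m)\le B_M$ for a constant $B_M$ depending only on $M$. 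On the other hand, the lower bound together with Lemma~\ref{gengemiddelde} shows $T_n/\sum_{m=1}^n m!\to\infty$ uniformly in $A$, so in particular $T_n\to\infty$ uniformly over all $(L,K_1)$-tree sequences.

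Given $\varepsilon$, set $\delta=\varepsilon/2$ and apply Lemma~\ref{gencarol} to fix an $M$ such that
\[ \bigl|\#U_{A,\alpha}(m)-\mu(\alpha)\#U_A(m,D(\alpha))\bigr| \;\le\; \delta\,\#U_A(m,D(\alpha))\]
for every $m\ge M$ and every $(L,K_1)$-tree sequence. Splitting $N_n-\mu(\alpha)T_n$ at $M$ and bounding the head contribution by $B_M$ gives $|N_n-\mu(\alpha)T_n|\le B_M+\delta T_n$, so $|N_n/T_n-\mu(\alpha)|\le B_M/T_n+\delta$. Choosing $N\ge M$ so that $B_M/T_n<\varepsilon/2$ for all $n\ge N$ — possible because $T_n\to\infty$ uniformly in $A$ — completes the proof. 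The main obstacle is ensuring uniformity across tree sequences; this is delivered by the head bound via $d(m)\le m\cdot m!$, the uniform denominator growth via Lemma~\ref{gengemiddelde}, and the uniform tail approximation of Lemma~\ref{gencarol}.
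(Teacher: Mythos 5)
Your proof is correct and follows essentially the same route as the paper's: a Ces\`aro-type truncation argument that splits the sum defining $N_A(\alpha, U_A^*(n,D(\alpha)))$ at a level $M$ supplied by Lemma~\ref{gencarol}, bounds the head contribution uniformly in $A$, and lets the uniformly divergent denominator absorb it. The only cosmetic difference is that the paper secures the uniform divergence of the denominator from the crude bound $\#U_A(m,D(\alpha))\ge 1$ for $m>D(\alpha)$, whereas you route it through the lower bound $\#U_A(m,D(\alpha))\ge d(m)-D(\alpha)\,m!$ and Lemma~\ref{gengemiddelde}; both work.
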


\begin{proof}
Fix $\varepsilon \in (0, \mu(\alpha))$. By Lemma~\ref{gencarol} we know that there exists an $N_1 \in \mathbb N$ such that for all $m \ge N_1$ and each $A$ we have
\[|\mu(\alpha) - N_A(\alpha,U_A(m,D(\alpha)))| < \frac{1}{2}\varepsilon.\]
Note that for each $(L,K_1)$-tree sequence $A$ we have $\#U_A(m,D(\alpha)) \geq 1$ for all $m > D(\alpha)$. Hence, there is an $N_2 > N_1$ such that for all $n \ge N_2$ and all $(L,K_1)$-tree sequences $A$, 
\[0<\frac{\sum_{m=1}^{N_1-1}\#U_A(m,D(\alpha))}{\sum_{m=N_1}^n \#U_A(m,D(\alpha))}< \frac{1}{2}\varepsilon.\]
Since $N_A(\alpha,U_A(m,D(\alpha))) \in [0,1]$ for all $m \in \mathbb N$ and $A$ and again using that \[U_A(n,D(\alpha)) \cap U_A(m,D(\alpha)) = \emptyset, \quad n \neq m,\]
we have the following estimate for all $n \geq N_2$ and $A$:
\begin{equation*}
\begin{split}
N_A\left(\alpha, U_A^*(n,D(\alpha))\right)	& = \frac{ \sum_{m=1}^n\#\{i \in U_A(m,D(\alpha)): s_c(i)\in \mathcal A(\alpha)\}}{\sum_{m=1}^n\#U_A(m,D(\alpha))} \\
& = \frac{ \sum_{m=1}^{n}N_A(\alpha,U_A(m,D(\alpha))) \cdot \#U_A(m,D(\alpha))}{\sum_{m=1}^{n}\#U_A(m,D(\alpha))}\\
&\leq \frac{ \sum_{m=1}^{N_1-1} \#U_A(m,D(\alpha))+\sum_{m=N_1}^{n}(\mu(\alpha)+\frac{1}{2}\varepsilon) \cdot \#U_A(m,D(\alpha))}{\sum_{m=1}^{n}\#U_A(m,D(\alpha))}\\
&\leq \frac{ \sum_{m=1}^{N_1-1} \#U_A(m,D(\alpha))}{\sum_{m=N_1}^{n}\#U_A(m,D(\alpha))} + \frac{\sum_{m=N_1}^{n}(\mu(\alpha)+\frac{1}{2}\varepsilon) \cdot \#U_A(m,D(\alpha))}{\sum_{m=N_1}^{n}\#U_A(m,D(\alpha))}\\
&< \frac{1}{2}\varepsilon+ \mu(\alpha) + \frac{1}{2}\varepsilon \\
&=\mu(\alpha) + \varepsilon.
\end{split}
\end{equation*}
Similarly, we obtain
\begin{equation*}
	\begin{split}
	N_A\left(\alpha, U_A^*(n,D(\alpha))\right)
				&\geq \frac{\sum_{m=N_1}^{n}(\mu(\alpha)-\frac{1}{2}\varepsilon) \cdot \#U_A(m,D(\alpha))}{\sum_{m=1}^{n}\#U_A(m,D(\alpha))}\\
				&= \frac{\sum_{m=N_1}^{n}(\mu(\alpha)-\frac{1}{2}\varepsilon) \cdot \#U_A(m,D(\alpha))}{\sum_{m=N_1}^{n}\#U_A(m,D(\alpha))} \cdot \frac{\sum_{m=N_1}^{n}\#U_A(m,D(\alpha))}{\sum_{m=1}^{n}\#U_A(m,D(\alpha))}\\
				&= \Big(\mu(\alpha)-\frac{1}{2}\varepsilon \Big) \cdot \left(1- \frac{\sum_{m=1}^{N_1-1}\#U_A(m,D(\alpha))}{\sum_{m=1}^{n}\#U_A(m,D(\alpha))}\right) \\
				&\geq \Big(\mu(\alpha)-\frac{1}{2}\varepsilon \Big) \cdot \Big(1-\frac{1}{2}\varepsilon \Big)  \\
&\geq \mu(\alpha) - (1+ \mu(\alpha)) \frac{1}{2}\varepsilon \\
&\geq \mu(\alpha) - \varepsilon.
	\end{split}
\end{equation*}
This gives the result.
\end{proof}

The next result compares the number of  indices in $U_A^*(n,D(\alpha))$ to the total number of indices corresponding to digits in words up to depth $n$, which is $d^*(n)$. 

\begin{lemma}\label{genverhouding}
Fix a $K_1 \in \mathbb R_{>1}$. Let $\alpha \in \mathcal A^*$ and $\varepsilon>0$. Then there is an $N_1 \in \mathbb N$ such that for all $n \ge N_1$ and all $(L,K_1)$-tree sequences $A  \in \mathcal A^\mathbb N$,
\[ \frac{\# U_A(n,D(\alpha))}{ d(n)} \in (1-\varepsilon,1]\] 
and there is an $N_2 \in \mathbb N$ such that for all $n \ge N_2$ and all $(L,K_1)$-tree sequences $A \in \mathcal A^\mathbb N$,
\[\frac{\# U_A^*(n,D(\alpha))}{ d^*(n)} \in (1-\varepsilon,1].\] 
\end{lemma}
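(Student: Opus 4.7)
The plan is to bound the complement of $U_A(n, D(\alpha))$ inside the set $\{i \in \mathbb N : D_c(i) = n\}$, which by definition of $d(n)$ has size exactly $d(n)$, and then show this complement is negligible compared to $d(n)$. Using the identity $D(p_c(i)) + D(s_c(i)) = D(\bm{u}_{c(i)}) = n$ from \eqref{q:Ds}, the condition $D(p_c(i)) < n - D(\alpha)$ is equivalent to $D(s_c(i)) > D(\alpha)$, so the complement of $U_A(n, D(\alpha))$ in $\{i : D_c(i) = n\}$ consists of those indices with $D(s_c(i)) \le D(\alpha)$.

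The key combinatorial observation is that for any fixed word $\bm{u}_j = b_1 \cdots b_k \in G(n)$, as $r$ ranges over the $k$ positions of the word, the suffix depths $D(s_c(i)) = b_r + b_{r+1} + \cdots + b_k$ form a strictly decreasing sequence of positive integers (consecutive differences are $b_r \ge 1$). In particular, these values are distinct, so at most $D(\alpha)$ of them can lie in $\{1, \ldots, D(\alpha)\}$. Summing over the $n!$ many depth-$n$ words guaranteed by (P1), I get
\[ d(n) - \#U_A(n, D(\alpha)) \le D(\alpha) \cdot n!. \]

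Given $\varepsilon > 0$, I would then invoke \eqref{q:dnn!} with any $M > D(\alpha)/\varepsilon$ to obtain $N_1$ such that $d(n)/n! > M$ for all $n \ge N_1$ and all $(L, K_1)$-tree sequences $A$. Dividing the above bound by $d(n)$ gives
\[ \frac{\#U_A(n, D(\alpha))}{d(n)} \ge 1 - \frac{D(\alpha) \cdot n!}{d(n)} > 1 - \varepsilon, \]
while $\#U_A(n,D(\alpha)) \le d(n)$ is immediate from $U_A(n, D(\alpha)) \subseteq \{i : D_c(i) = n\}$, settling the first assertion.

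For the second assertion, summing the bound $d(m) - \#U_A(m, D(\alpha)) \le D(\alpha) \cdot m!$ over $m \in \llbracket 1, n \rrbracket$ yields
\[ d^*(n) - \#U_A^*(n, D(\alpha)) \le D(\alpha) \sum_{m=1}^n m!, \]
and applying Lemma~\ref{gengemiddelde} with $M > D(\alpha)/\varepsilon$ produces an $N_2$ such that $d^*(n)/\sum_{m=1}^n m! > M$ for all $n \ge N_2$ uniformly in $A$, which yields the desired lower bound by the same manipulation; the upper bound $\le 1$ is again trivial. There is no real obstacle in this argument: everything reduces to the elementary observation that a depth-$n$ word contains at most $D(\alpha)$ positions with a suffix of depth $\le D(\alpha)$, so the entire proof is just counting followed by an application of the already established growth statements for $d(n)/n!$ and $d^*(n)/\sum_{m=1}^n m!$.
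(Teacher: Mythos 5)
Your proof is correct and follows essentially the same route as the paper: both arguments reduce to the observation that each depth-$n$ word contributes at most $D(\alpha)$ indices outside $U_A(n,D(\alpha))$ (the paper phrases this via $|s_c(i)|\le D(s_c(i))$, you via the distinctness of the suffix depths, which is equivalent), giving $d(n)-\#U_A(n,D(\alpha))\le D(\alpha)\,n!$, after which \eqref{q:dnn!} and Lemma~\ref{gengemiddelde} finish the two assertions exactly as in the paper.
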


\begin{proof}
For any $\bm{u} \in \mathcal A^*$ it holds that $|\bm{u}| \le D(\bm{u})$. Hence, by \eqref{q:Ds}, for any $(L,K_1)$-tree sequence $A$ and index $i \in \mathbb N$ it holds that
\[ |s_c(i)| \le D_c(i)-D(p_c(i)).\]
This implies that if $|s_c(i)| \ge D(\alpha)+1$, then $i \in U_A(n, D(\alpha))$ for any $n \ge D_c(i)$ or in other words, at most $D(\alpha)$ digits $a_i$ of each word $\bm{u}_{c(i)}$ in $A_c$ with $D(\bm{u}_{c(i)})=n$ do not belong to $U_A(n,D(\alpha))$. This yields
\begin{equation*}\label{q:genpiet}
\# U_A(n,D(\alpha)) \ge d(n)-D(\alpha)n!.
\end{equation*}
Let $\varepsilon >0$. Then by \eqref{q:dnn!} there exists an $N_1 \in \mathbb N$ such that for all $n \ge N_1$ and any $(L,K_1)$-tree sequence $A$,
\[ 1 \geq  \frac{\#  U_A(n,D(\alpha))}{ d(n)}  \geq \frac{ d(n)-D(\alpha)n!}{ d(n)} = 1- D(\alpha)\frac{n!}{ d(n)} > 1-\varepsilon.\]
Similarly, from Lemma~\ref{gengemiddelde} we obtain an $N_2 \in \mathbb N$ such that for all $n \ge N_2$ and any $(L,K_1)$-tree sequence $A$,
\[ 1 \geq  \frac{\#  U_A^*(n,D(\alpha))}{ d^*(n)} \ge \frac{d^*(n)-D(\alpha)\sum_{m=1}^n m!}{d^*(n)}  >1-\varepsilon.\qedhere \]
\end{proof}


With this lemma we can extend the result from Proposition~\ref{p:NUstar} from $ U_A^*(n, D(\alpha))$ to all of $\llbracket 1,  d^*(n) \rrbracket$.
\begin{prop}\label{p:d-normal}
Fix a $K_1 \in \mathbb R_{>1}$. Let $\alpha \in \mathcal A^*$ and $\varepsilon>0$. Then there is an $N_1 \in \mathbb N$ such that for all $n \ge N_1$ and $(L,K_1)$-tree sequences $A = (a_i)_{i \in \mathbb N} \in \mathcal A^\mathbb N$,
\[ |N_A(\alpha, \llbracket d^*(n-1)+1, d^*(n) \rrbracket ) - \mu(\alpha)| < \varepsilon\]
and there is an $N_2 \in \mathbb N$ such that for all $n \ge N_2$ and $(L,K_1)$-tree sequences $A = (a_i)_{i \in \mathbb N} \in \mathcal A^\mathbb N$,
\[ |N_A(\alpha, \llbracket 1, d^*(n) \rrbracket ) - \mu(\alpha)| < \varepsilon.\]
\end{prop}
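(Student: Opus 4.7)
The plan is to compare, for each $n$, the count of occurrences of $\alpha$ in the relevant index range with the ``internal'' subcount $\#U_{A,\alpha}(n)$ (for the first statement) or $\#U_{A,\alpha}^*(n)$ (for the second), where $\alpha$ appears entirely within a single word $\bm{u}_{c(i)}$. One directly checks the inclusion $U_{A,\alpha}(n) \subseteq V_n$ (with $V_n$ the full count defined below), and that every index in the difference $V_n \setminus U_{A,\alpha}(n)$ lies among the last $|\alpha|$ positions of some word of depth $n$: in such an index one has either $|s_c(i)| < |\alpha|$ (so $\alpha$ straddles into the next word) or $|s_c(i)| = |\alpha|$ with $s_c(i) = \alpha$ (forcing $D(s_c(i)) = D(\alpha)$ and hence $i \notin U_A(n,D(\alpha))$). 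This yields the boundary estimate $\#V_n - \#U_{A,\alpha}(n) \le D(\alpha)\, n!$, which by \eqref{q:dnn!} is $o(d(n))$ uniformly in $A$.

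For the first statement, set $V_n := \{i \in \llbracket d^*(n-1)+1, d^*(n) \rrbracket : a_i \cdots a_{i+|\alpha|-1} = \alpha\}$, so that $N_A(\alpha, \llbracket d^*(n-1)+1, d^*(n) \rrbracket) = \#V_n / d(n)$. I would write
\[ \frac{\#V_n}{d(n)} \;=\; \frac{\#U_{A,\alpha}(n)}{\#U_A(n, D(\alpha))} \cdot \frac{\#U_A(n, D(\alpha))}{d(n)} \;+\; \frac{\#V_n - \#U_{A,\alpha}(n)}{d(n)}, \]
and apply Lemma~\ref{gencarol} to the first factor (whose limit is $\mu(\alpha)$), the first part of Lemma~\ref{genverhouding} to the second factor (whose limit is $1$), and the boundary estimate above to the final remainder (which tends to $0$). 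Choosing the intermediate error parameter small enough as a function of $\varepsilon$ and $\mu(\alpha)$ yields the first assertion.

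The second statement is then proved by the same argument with $V_n$ replaced by $V_n^* := \{i \in \llbracket 1, d^*(n) \rrbracket : a_i \cdots a_{i+|\alpha|-1} = \alpha\}$, $U_{A,\alpha}(n)$ replaced by $U_{A,\alpha}^*(n) := \bigcup_{m=1}^n U_{A,\alpha}(m)$, $U_A(n, D(\alpha))$ replaced by $U_A^*(n, D(\alpha))$, and $d(n)$ replaced by $d^*(n)$. The boundary discrepancy $\#V_n^* - \#U_{A,\alpha}^*(n) \le D(\alpha)\sum_{m=1}^n m!$ is $o(d^*(n))$ by Lemma~\ref{gengemiddelde}, and Proposition~\ref{p:NUstar} together with the second part of Lemma~\ref{genverhouding} play the roles of Lemma~\ref{gencarol} and the first part of Lemma~\ref{genverhouding}. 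The only (mild) obstacle is bookkeeping: ensuring that all three error contributions are uniform in the $(L,K_1)$-tree sequence $A$, which is already built into the uniform statements of each of the cited results, so no extra work is required.
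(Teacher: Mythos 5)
Your proof is correct and follows essentially the same route as the paper: both decompose the count over $\llbracket d^*(n-1)+1,d^*(n)\rrbracket$ (resp.\ $\llbracket 1,d^*(n)\rrbracket$) into the contribution from $U_A(n,D(\alpha))$ (resp.\ $U_A^*(n,D(\alpha))$), controlled by Lemma~\ref{gencarol} (resp.\ Proposition~\ref{p:NUstar}) together with Lemma~\ref{genverhouding}, plus a boundary remainder of size at most $D(\alpha)\,n!$ (resp.\ $D(\alpha)\sum_{m=1}^n m!$) which is negligible by \eqref{q:dnn!} (resp.\ Lemma~\ref{gengemiddelde}). Your explicit re-derivation of the boundary bound is exactly the estimate already packaged inside Lemma~\ref{genverhouding}, so the two arguments coincide.
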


\begin{proof}
Write $|\alpha|=k$. Then
\[ \begin{split}
N_A (\alpha,  \llbracket d^*(n-1)+1&,   d^*(n) \rrbracket )\\
=\ & \frac{\# \{ i \in  U_A(n, D(\alpha)) \, : \,   a_i = \alpha_1, \ldots, a_{i+k-1}=\alpha_k \}}{ d(n)}\\
& + \frac{\# \{ i \in \llbracket d^*(n-1)+1, d^*(n) \rrbracket \setminus U_A(n, D(\alpha)) \, : \, a_i = \alpha_1, \ldots, a_{i+k-1}=\alpha_k \}}{ d(n)}\\
=\ & \frac{N_A(\alpha,  U_A(n, D(\alpha))) \cdot \#  U_A(n, D(\alpha))}{d(n)}\\
& + \frac{N_A(\alpha, \llbracket d^*(n-1)+1,  d^*(n) \rrbracket \setminus U_A(n, D(\alpha))) \cdot (  d(n) - \#  U_A(n, D(\alpha)))}{ d(n)}.\\
\end{split}\]

Let $\varepsilon >0$. By Lemma~\ref{gencarol} and Lemma~\ref{genverhouding} there is an $N_1 \in \mathbb N$ such that for each $n \ge N_1$ and each $(L,K_1)$-tree sequence $A$,
\[ 1-\frac{\varepsilon}{3} < \frac{\# U_A(n, D(\alpha))}{d(n)} \le 1\]
and
\[ |N_A (\alpha, U_A(n,D(\alpha)))-\mu(\alpha)| < \frac{\varepsilon}{3}.\]
Then
\[ \begin{split} 
|N_A(\alpha, \llbracket d^*(n-1)+1,&  d^*(n) \rrbracket ) - \mu(\alpha)|\\
\le \ & |N_A(\alpha, U_A(n,D(\alpha)))-\mu(\alpha)| \frac{\# U_A(n,D(\alpha))}{d(n)} + \mu(\alpha) \left| \frac{\# U_A(n,D(\alpha))}{d(n)} -1 \right|\\
& + N_A(\alpha, \llbracket d^*(n-1)+1,  d^*(n) \rrbracket \setminus U_A(n, D(\alpha))) \cdot \frac{  d(n) - \#  U_A(n, D(\alpha))}{ d(n)}\\
< & \frac{\varepsilon}{3} + \mu(\alpha)\frac{\varepsilon}{3} + \frac{\varepsilon}{3} < \varepsilon.
\end{split}\]

\medskip
Similarly,
\[ \begin{split}
N_A (\alpha, \llbracket 1,  d^*(n) \rrbracket ) =\ & \frac{\# \{ i \in  U_A^*(n, D(\alpha)) \, : \,   a_i = \alpha_1, \ldots, a_{i+k-1}=\alpha_k \}}{ d^*(n)}\\
& + \frac{\# \{ i \in \llbracket 1, d^*(n) \rrbracket \setminus U_A^*(n, D(\alpha)) \, : \, a_i = \alpha_1, \ldots, a_{i+k-1}=\alpha_k \}}{ d^*(n)}\\
=\ & \frac{N_A(\alpha,  U_A^*(n, D(\alpha))) \cdot \#  U_A^*(n, D(\alpha))}{d^*(n)}\\
& + \frac{N_A(\alpha, \llbracket 1,  d^*(n) \rrbracket \setminus U_A^*(n, D(\alpha))) \cdot (  d^*(n) - \#  U_A^*(n, D(\alpha)))}{ d^*(n)}.\\
\end{split}\]
The result now follows as above from Lemma~\ref{genverhouding} and Proposition~\ref{p:NUstar}.
\end{proof}

Proposition~\ref{p:d-normal} states that $(L,K_1)$-tree sequences $A \in \mathcal A^\mathbb N$ have normality along the subsequence of indices $(d^*(n))_{n \in \mathbb N}$. The additional assumption (P3) aims to control what happens for the intermediate indices $ i \in \llbracket d^*(n-1)+1, d^*(n) \rrbracket$. In the next section we prove that this is enough to obtain $L$-normal sequences.

\subsection{$L$-normal sequences}
Let $A = (a_i)_{i \in \mathbb N} \in \mathcal A^\mathbb N$ be an $(L,K_1,K_2)$-tree sequence for some $K_1 \in \mathbb R_{>1}$ and $K_2 \in \mathbb R_{>4}$ and with concatenation map $c$. The digits $a_i$ with indices $i \in \llbracket d^*(n-1)+1, d^*(n) \rrbracket$ correspond to all $n!$ words $\bm{u}_{c(i)} \in G(n)$ that appear in $A_c$. The assumption of property (P3) regards a division of the indices in the interval $\llbracket d^*(n-1)+1, d^*(n) \rrbracket$ into $n(n-1)$ intervals of indices that each correspond to $(n-2)!$ words from $A_c$. For any $n \in \mathbb N_{\ge 3}$ and $k \in \llbracket 1, n(n-1) \rrbracket$ let
\[ I_A^k(n) : = \left\{ i \in \mathbb N\, :\,  c(i) \in \Big\llbracket \sum_{m=1}^{n-1}m! +1 + (n-2)!(k-1) ,\sum_{m=1}^{n-1}m! + (n-2)!k \Big\rrbracket  \right\}\]
be the set of corresponding indices in $A$.

\begin{prop}\label{opa}
Fix $\alpha \in \mathcal A^*$ and let $A$ be an $(L,K_1,K_2)$-tree sequence. Then for all $\varepsilon >0$ there is an $N \in \mathbb N$ such that for all $n \ge N$ and $k \in \llbracket 1, n(n-1) \rrbracket$,
\[ |N_{A}(\alpha,  I^k_{A}(n))-\mu(\alpha)| < \varepsilon. \]
\end{prop}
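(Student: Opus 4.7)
The plan is to mirror the chain of arguments in Lemma~\ref{gencarol} and Lemma~\ref{genverhouding} (and their combination in Proposition~\ref{p:d-normal}), but restricted to the $k$-th block $I^k_A(n)$ and with property (P3) replacing (P2). Write $U_A^k(n,l) = U_A(n,l) \cap I^k_A(n)$, $U^k_{A,\alpha}(n) = U_{A,\alpha}(n) \cap I^k_A(n)$, and let $J^k(n)$ denote the corresponding set of word-indices in the $k$-th group.

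The main step is an analog of Lemma~\ref{gencarol}: for $n$ large, $|N_A(\alpha, U_A^k(n, D(\alpha))) - \mu(\alpha)| < \varepsilon/2$ uniformly in $k$. As in Lemma~\ref{gencarol}, $\#U^k_{A,\alpha}(n)$ and $\#U_A^k(n, D(\alpha))$ decompose as sums over $\beta \in \mathcal A^*_\epsilon$ with $D(\beta) < n - D(\alpha)$ of the counts $N_{\gamma, k} := \#\{j \in J^k(n) : \bm{u}_j \text{ begins with } \gamma\}$ for $\gamma = \beta\alpha$ and $\gamma = \beta$ respectively. Each $N_{\gamma, k}$ is then estimated by splitting over the grandparent $\bm{v} = \bm{u}_j^{-2} \in G(n-2)$. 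If $|\bm{v}| \ge |\gamma| + 1$, then the four grandchildren $\bm{v}11,\bm{v}2,\bm{v}^+1,(\bm{v}^+)^+$ share the same first $|\bm{v}|-1 \ge |\gamma|$ digits (namely $\bm{v}|_{|\bm{v}|-1}$), so either all of them begin with $\gamma$ (if $\bm{v}|_{|\gamma|} = \gamma$) or none do; in the former case (P3) yields a contribution of $(n-2)!\,W(\bm{v}) + e^k_{\bm{v}}$ with $|e^k_{\bm{v}}| < K_2$. The ``ambiguous'' grandparents with $|\bm{v}| \le |\gamma|$ contribute some value in $[0,\, (n-2)!\,W(\bm{v}) + K_2]$. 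The identity $\sum_{\bm{v} \text{ good}} W(\bm{v}) = \mu(\gamma)$ from Lemma~\ref{l:weight}(ii) then produces
\[ \#U^k_{A,\alpha}(n) = (n-2)!\,\mu(\alpha)\,S_n + E_1, \qquad \#U_A^k(n, D(\alpha)) = (n-2)!\,S_n + E_2, \]
where $S_n := \sum_{D(\beta) < n - D(\alpha)} \mu(\beta)$.

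The crucial error estimate is that each grandchild $\bm{u}_j$ of an ambiguous $\bm{v}$ can contribute to at most three different values of $\beta$: the admissible range $|\beta| \in [|\bm{v}|-|\alpha|,\,|\bm{v}|+2-|\alpha|]$ is an interval of size at most three, and for each such length $\beta$ is uniquely determined as $\bm{u}_j|_{|\beta|}$. Swapping the order of summation then gives $|E_i^{(\text{amb})}| \le 3(n-2)! + O(K_2 \cdot 2^n)$, while the good-case errors from (P3) satisfy $|E_i^{(\text{good})}| \le K_2 \cdot 4^n$. Because $\sum_{\beta \in \mathcal A^*_\epsilon} \mu(\beta) = 1 + \sum_{l \ge 1}\big(\sum_k L_k\big)^l = +\infty$, we have $S_n \to \infty$; hence both $|E_i|/((n-2)!\,S_n) \to 0$ and the ratio $\#U^k_{A,\alpha}(n)/\#U_A^k(n, D(\alpha)) \to \mu(\alpha)$ uniformly in $k$.

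To conclude, the analog of Lemma~\ref{genverhouding} is routine: at most $D(\alpha) \cdot (n-2)!$ indices in $I^k_A(n)$ can fail the condition $D(s_c(i)) > D(\alpha)$, while $\#I^k_A(n) = \sum_{j \in J^k(n)} |\bm{u}_j| \ge (n-2)!\,\Delta_{n-2} - O(K_2\, n\, 2^n)$ by grouping digit counts by grandparent and using $|\bm{u}| \in \{|\bm{v}|,|\bm{v}|+1,|\bm{v}|+1,|\bm{v}|+2\}$. Since $\Delta_{n-2} \to \infty$ by Lemma~\ref{l:unbounded}, we get $\#U_A^k(n, D(\alpha))/\#I^k_A(n) \to 1$ uniformly in $k$, and combining this with the previous step exactly as in the proof of Proposition~\ref{p:d-normal} yields the statement. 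The main obstacle is the ambiguous-grandparent analysis; the key insight unlocking it is that an ambiguous grandchild contributes to only boundedly many $\beta$, so the total over-counting is $O((n-2)!)$ and is absorbed by the divergence of $S_n$.
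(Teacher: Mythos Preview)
Your approach is correct but takes a genuinely different route from the paper's. The paper does \emph{not} redo the Lemma~\ref{gencarol}/Lemma~\ref{genverhouding} analysis on the blocks $I^k_A(n)$. Instead it exploits (P3) to build, for each pair $(n,k)$, an auxiliary $(L,K_2)$-tree sequence $A'=A'(n,k)$ whose words at depth $n-2$ are precisely the grandparents $\bm{u}_j^{-2}$ of the words in the $k$-th block, in the same order. Because a word and its grandparent differ in at most two digits, the block $I^k_A(n)$ and the depth-$(n-2)$ block $\llbracket (d')^*(n-3)+1,(d')^*(n-2)\rrbracket$ of $A'$ differ in length by at most $2(n-2)!$ and in the count of $\alpha$-occurrences by at most $(2|\alpha|+2)(n-2)!$. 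Dividing by $d'(n-2)\ge (n-2)!\,\Delta_{n-2}-O(K_2 n2^n)$ and using $\Delta_{n-2}\to\infty$ (Lemma~\ref{l:unbounded}) kills both discrepancies, and the first part of Proposition~\ref{p:d-normal} applied to $A'$ finishes. Uniformity in $k$ comes for free from the uniformity of Proposition~\ref{p:d-normal} over all $(L,K_2)$-tree sequences.

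Your direct route works too, and makes the mechanism of (P3) visible at the level of prefix counts, but at the cost of the good/ambiguous grandparent bookkeeping. Two small points: the identity $\sum_{\bm v\text{ good}}W(\bm v)=\mu(\gamma)$ needs $D(\gamma)\le n-3$, so for $D(\gamma)\in\{n-2,n-1\}$ there is an extra discrepancy, but summing $\mu(\gamma)$ over those $\gamma$ gives at most $2(n-2)!$, which your $E_i$ absorbs; and your justification of $S_n\to\infty$ is fine once one notes that the sets $\{D(\beta)<m\}$ exhaust $\mathcal A^*_\epsilon$ and $\sum_{|\beta|=l}\mu(\beta)=1$. The paper's argument is shorter and more modular (it recycles Proposition~\ref{p:d-normal} wholesale), while yours is more self-contained.
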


\begin{proof}
For each $n \in \mathbb N_{\ge 3}$ and $k \in \llbracket 1, n(n-1) \rrbracket$ consider the words
\[ \bm{u}_{\sum_{m=1}^{n-1}m!+(k-1)(n-2)!+j}, \quad j \in \llbracket 1, (n-2)! \rrbracket.\]
Property (P3) implies that the grandparents of these words satisfy (P2) with error margin $K_2$. Therefore, we can construct an $(L,K_2)$-tree sequence $A' = A'(n,k) \in \mathcal A^\mathbb N$ 
for which the concatenation sequence $A'_{c'}=(\bm{u}_j')_{j \in \mathbb N}$
has 
\[ \bm{u}'_{\sum_{m=1}^{n-3}m! +j} = \bm{u}^{-2}_{\sum_{m=1}^{n-1}m!+(k-1)(n-2)!+j}, \quad j \in \llbracket 1, (n-2)! \rrbracket.\]
For the error margins this implies that $e'_\beta = e^k_\beta$ for each $\beta \in G(n-2)$. Since for each word $\bm{u}$ we have $|\bm{u}^{-2}|\le |\bm{u}| \le |\bm{u}^{-2}|+2$, it then holds that
\begin{equation}\label{q:d'I}
d'(n-2) \le \# I_A^k(n) \le d'(n-2) + (n-2)! \cdot 2,
\end{equation}
where $d'(n-2) = \sum_{j: \bm{u}'_j \in G(n-2)} |\bm{u}'_j|$.

\medskip
We now compare the possible occurrences of the word $\alpha$ in the string
\begin{equation}\label{q:uchanges1}
\bm{u}'_{\sum_{m=1}^{n-3}m!+1} \ldots \bm{u}'_{\sum_{m=1}^{n-2}m!}
\end{equation}
to those in the string
\begin{equation}\label{q:uchanges2}
\bm{u}_{\sum_{m=1}^{n-1}m!+(k-1)(n-2)!+1} \ldots \bm{u}_{\sum_{m=1}^{n-1}m!+k(n-2)!}.
\end{equation}
Changing a word $\bm{u}'_{\sum_{m=1}^{n-3}m! +j}$ to the corresponding word $\bm{u}_{\sum_{m=1}^{n-1}m!+(k-1)(n-2)!+j}$ amounts to changing or adding at most two digits. Note that a change or addition of a single digit of the sequence $A'$ in a given position $i^*$, say, has an effect on the number of occurrence of the word $\alpha \in A'$ only in the range of indices $\llbracket i^*-|\alpha|+1, i^* \rrbracket$. Hence, the change from \eqref{q:uchanges1} to \eqref{q:uchanges2} can result in a change in at most $2|\alpha|+2$ occurrences of the word $\alpha$. Since $I_A^k(n)$ contains digits belonging to precisely $(n-2)!$ words, we obtain
\[ |N_A(\alpha, I_A^k(n))\# I_A^k(n) - N_{A'} (\alpha, \llbracket (d')^*(n-3)+1, (d')^*(n-2) \rrbracket ) d'(n-2)| \le (n-2)!(2|\alpha|+2).\]
Combining this with \eqref{q:d'I} gives
\[  N_A(\alpha, I_A^k(n)) \le  N_{A'} (\alpha, \llbracket (d')^*(n-3)+1, (d')^*(n-2) \rrbracket )  + (2|\alpha|+2)\frac{(n-2)!}{d'(n-2)}\]
and
\begin{multline*}
N_A(\alpha, I_A^k(n))\\
\ge  N_{A'} (\alpha, \llbracket (d')^*(n-3)+1, (d')^*(n-2) \rrbracket ) \left(\frac1{1+2\frac{(n-2)!}{d'(n-2)}} \right)- (2|\alpha|+2)\frac{\frac{(n-2)!}{d'(n-2)}}{1+2 \frac{(n-2)!}{d'(n-2)}}.
\end{multline*}
The result now follows by applying \eqref{q:dnn!} and Proposition~\ref{p:d-normal} to the sequence $A'$.
\end{proof}

Proposition~\ref{opa} gives the normality of $(L,K_1,K_2)$-tree sequences $A$ along the subsequence of indices marked by the endpoints of the sets $I_A^k(n)$. To prove the $L$-normality of $(L,K_1,K_2)$-tree sequences it remains to consider the indices that fall within the sets $I_A^k(n)$. This is done in the next theorem.

\begin{theorem}
Let $A$ be an $(L,K_1,K_2)$-tree sequence. Then for any $\alpha \in \mathcal A^*$,
\[\lim_{M \to  \infty} N_A(\alpha, \llbracket 1, M \rrbracket) = \mu(\alpha).\]
\end{theorem}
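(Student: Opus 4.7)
The plan is to decompose $\llbracket 1, M\rrbracket$ into full blocks $I_A^k(n)$, on which Proposition~\ref{opa} directly gives the desired digit frequency, plus a bounded initial segment and a single trailing fragment, and then to show that the trailing fragment is asymptotically negligible compared to $M$.

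Fix $\alpha \in \mathcal A^*$ and $\varepsilon > 0$. First I would apply Proposition~\ref{opa} with tolerance $\varepsilon/2$ to obtain some $N_0 \ge 3$ so that $|N_A(\alpha, I_A^j(m)) - \mu(\alpha)| < \varepsilon/2$ for all $m \ge N_0$ and $j \in \llbracket 1, m(m-1)\rrbracket$. Since $(n-2)! \cdot n(n-1) = n!$, the sets $I_A^1(m), \ldots, I_A^{m(m-1)}(m)$ partition $\llbracket d^*(m-1)+1, d^*(m)\rrbracket$. For large $M$, I let $n$ be the unique integer with $d^*(n-1) < M \le d^*(n)$, $k$ the unique index with $M \in I_A^k(n)$, and set $R = \llbracket 1, M\rrbracket \cap I_A^k(n)$. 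This yields the disjoint decomposition
\[ \llbracket 1, M\rrbracket \;=\; \llbracket 1, d^*(N_0-1)\rrbracket \;\cup\; \bigcup_{m=N_0}^{n-1} \bigcup_{j=1}^{m(m-1)} I_A^j(m) \;\cup\; \bigcup_{j=1}^{k-1} I_A^j(n) \;\cup\; R.\]
Counting occurrences of $\alpha$ block by block, using Proposition~\ref{opa} on the full blocks and bounding the counts on $\llbracket 1, d^*(N_0-1)\rrbracket$ and $R$ trivially by their cardinalities, I would deduce
\[ |N_A(\alpha, \llbracket 1, M\rrbracket) - \mu(\alpha)| \;\le\; \frac{d^*(N_0-1)}{M} + \frac{\varepsilon}{2} + \frac{\#R}{M}.\]
The first term vanishes in the limit since $d^*(N_0-1)$ is a fixed constant independent of $M$.

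The main obstacle is controlling the tail ratio $\#R/M$. I would bound $\#R \le \#I_A^k(n) \le d'(n-2) + 2(n-2)!$ via \eqref{q:d'I}, where $d'$ refers to the auxiliary $(L, K_2)$-tree sequence built in the proof of Proposition~\ref{opa}. Property (P2) applied to that sequence, together with the crude bound $|\bm{u}'| \le D(\bm{u}') = n-2$, yields $d'(n-2) \le (n-2)! \Delta_{n-2} + O\!\bigl((n-2) 2^{n-2}\bigr)$ in the notation $\Delta_m = \sum_{\bm{u} \in G(m)} W(\bm{u}) |\bm{u}|$ of Lemma~\ref{l:unbounded}. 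Conversely, \eqref{ster} combined with $\Delta_n \to \infty$ gives $M \ge d^*(n-1) \ge d(n-1) \ge \tfrac12 (n-1)! \Delta_{n-1}$ for $n$ large. Using the monotonicity $\Delta_{n-2} \le \Delta_{n-1}$ established in the proof of Lemma~\ref{l:unbounded}, the ratio $\#R/M$ is of order $1/(n-1)$ and tends to zero as $M \to \infty$ forces $n \to \infty$. Hence the right-hand side of the displayed bound is less than $\varepsilon$ for all sufficiently large $M$, which completes the proof.
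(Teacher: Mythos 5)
Your proposal is correct and follows essentially the same route as the paper: decompose $\llbracket 1,M\rrbracket$ into a head, the full blocks $I_A^j(\cdot)$, and a trailing fragment $R$, apply Proposition~\ref{opa} to the full blocks, and kill $\#R/M$ via \eqref{q:d'I}, \eqref{ster} and the monotone divergence of $\Delta_n$ from Lemma~\ref{l:unbounded}. The only (cosmetic) difference is that the paper handles the head $\llbracket 1, d^*(n_M-1)\rrbracket$ in one stroke with Proposition~\ref{p:d-normal}, whereas you re-derive the same estimate by averaging Proposition~\ref{opa} over all full blocks of depths $N_0,\dots,n-1$; both are valid.
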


\begin{proof}
For any integer $M \geq d^*(6)$ there are unique integers $n_M \in \mathbb N$ and  $k_M \in \llbracket 1, n_M(n_M-1) \rrbracket$ such that $M\in I_A^{k_M}(n_M)$.  Write
\[R_M=M-d^*(n_M-1) - \sum_{k=1}^{k_M-1} \# I_A^k(n_M) \in \mathbb N.\]
Then
\[ \begin{split}
N_A(\alpha, \llbracket 1, M\rrbracket )  =\ & \frac{N_A(\alpha, \llbracket 1, d^*(n_M-1) \rrbracket)d^*(n_M-1)}{M} + \sum_{k=1}^{k_M-1}\frac{N_A(\alpha, I_A^k(n_M))\#I_A^k(n_M)}{M} \\
& + \frac{N_A(\alpha, \llbracket M-R_M +1, M \rrbracket)R_M}{M}.
\end{split}\]
We begin by showing that $\lim_{M \to \infty} \frac{R_M}M =0$, so that we can disregard the last term. As in the proof of Proposition~\ref{opa}, using (P3) we find an $(L,K_2)$-tree sequence $A'=A'(n_M,k_M) \in \mathcal A^\mathbb N$ for which the concatenation sequence $A'_{c'}=(\bm{u}_j')_{j \in \mathbb N}$
has 
\[ \bm{u}'_{\sum_{m=1}^{n_M-3}m! +j} = \bm{u}^{-2}_{\sum_{m=1}^{n_M-1}m!+(k_M-1)(n_M-2)!+j}, \quad j \in \llbracket 1, (n_M-2)! \rrbracket.\]
If we again let $d'(n-2) = \sum_{j: \bm{u}'_j \in G(n-2)} |\bm{u}'_j|$ for each $n \in \mathbb N$, then $R_M \in \llbracket 1, d'(n_M-2) +  2(n_M-2)! \rrbracket$. Recall the definition of the increasing sequence $(\Delta_n)_{n \in \mathbb N}$ from the proof of Lemma~\ref{l:unbounded}. By \eqref{ster} we have for any $n \in \mathbb N_{\ge 3}$ that
\begin{equation*}
\begin{split}
\frac{d'(n-2)}{d^*(n-1)} \leq \frac{d'(n-2)}{d(n-1)} & \leq \frac{\Delta_{n-2} \cdot (n-2)! + (n-2)K_2 2^{n-2}}{\Delta_{n-1} \cdot (n-1)! - (n-1)K_1 2^{n-1}} \\
& \leq \frac{\frac{\Delta_{n-2}}{n-1} + \frac{K_2 2^{n-2}}{(n-2)!}}{\Delta_{n-1} - \frac{K_1 2^{n-1}}{(n-2)!}} \\
&\leq  \frac{\frac{1}{n-1} + \frac{K_2 2^{n-2}}{\Delta_{n-2}(n-2)!}}{1 - \frac{K_1 2^{n-1}}{\Delta_{n-2}(n-2)!}}.
\end{split}
\end{equation*}
By Lemma~\ref{l:unbounded},
\[ \lim_{n \to \infty} \frac{\frac{1}{n-1} + \frac{K_2 2^{n-2}}{\Delta_{n-2}(n-2)!}}{1 - \frac{K_1 2^{n-1}}{\Delta_{n-2}(n-2)!}}=0.\]
As $\lim_{M \to \infty}n_M = \infty$, by \eqref{q:dnn!} we also have
\[ \lim_{M \to \infty} \frac{(n_M-2)!}{d(n_M-2)}  =0.\]
Since for each $M$,
\[ 0 \le \frac{R_M}{M} \le \frac{d'(n_M-2)}{d^*(N_M-1)} + \frac{2(n_M-2)!}{d(n_M-2)},\]
we find that
\begin{equation}\label{q:MRM}
\lim_{M \to \infty} \frac{R_M}{M} = 0,
\end{equation}
which implies that
\[ \lim_{M \to \infty} \frac{N_A(\alpha, \llbracket M-R_M +1, M \rrbracket)R_M}{M} =0.\]
Let $\varepsilon >0$. Combining Proposition~\ref{p:d-normal}, Proposition~\ref{opa} and \eqref{q:MRM} we know that there exists an $N \in \mathbb N$ such that for all $M \in \mathbb N$ with $n_M \ge N$ it holds that
\[ \begin{split} \frac{N_A(\alpha, \llbracket 1, d^*(n_M-1) \rrbracket)d^*(n_M-1)}{M} & +  \sum_{k=1}^{k_M-1}\frac{N_A(\alpha, I_A^k(n_M))\#I_A^k(n_M)}{M}\\
 \le\ & (\mu(\alpha)+\varepsilon) \frac{d^*(n_M-1) +\sum_{k=1}^{k_M-1} \#I_A^k(n_M) }{M}\\
  \le \ &\mu(\alpha)+\varepsilon
 \end{split} \]
and
\[ \frac{N_A(\alpha, \llbracket 1, d^*(n_M-1) \rrbracket)d^*(n_M-1)}{M} + \sum_{k=1}^{k_M-1}\frac{N_A(\alpha, I_A^k(n_M))\#I_A^k(n_M)}{M} \ge (\mu(\alpha)+\varepsilon) (1-\varepsilon).\]
This yields the desired result.
\end{proof}

Let us give two examples of constructions of $(L,K_1,K_2)$-tree sequences, one for the L\"uroth probability sequence $L = (\frac1{d(d+1)})_{d \in \mathbb N}$ and one for the dyadic probability sequence $L = (\frac1{2^d})_{d \in \mathbb N}$.

\begin{ex}\label{voorbeeld1}
Let $L = (\frac1{d(d+1)})_{d \in \mathbb N}$. We describe the construction of an $(L,1+\varepsilon,4+\varepsilon)$-tree sequence up to the words of depth 4.

\medskip

\noindent \underline{Depth 1:} According to (P1) we add one word of depth 1. As $G(1)=\{ 1\}$, this leaves no choice and we put $a_1 = 1 = \bm{u}_1$. For the concatenation sequence $c$ this implies that $m(1)=M(1)=1$, so $c(1)=1$.

\medskip
\noindent \underline{Depth 2:} Condition (P1) tells us that we add two words of depth 2. Since $G(2) = \{ 11, 2\}$ and $W(11) = \frac12 = W(2)$, it natural to add both words once. Note however that (P2) would also allow either word to be used twice. We choose to put $\bm{u}_2 = 11$ and $\bm{u}_3 = 2$. This yields $a_2=a_3=1$ and $a_4 =2$ and $c(2)=c(3)=2$ and $c(4)=3$.

\medskip
\noindent \underline{Depth 3:} By condition (P1) we have to add six words from the set $G(3) = \{ 111, 12, 21, 3\}$. They have weights
\[W(111)= \Big(\frac12\Big)^2 = \frac14, \quad W(12) = \Big(\frac12\Big)^2 = \frac14, \quad W(21) = \frac12 \cdot \frac13 = \frac16, \quad W(3) = \frac12\cdot \frac23 = \frac13. \]
For $\varepsilon < \frac12$, (P2) forces us to add $111$ and $12$ either once or twice. It is again natural to $21$ once and $3$ twice, but as in the previous step (P2) allows us to give or take at least one. As we still do not have to take (P3) into account, we can choose e.g.~$\bm{u}_4 = 111$, $\bm{u}_5 = 111$, $\bm{u}_6=12$, $\bm{u}_7 = 21$, $\bm{u}_8 = 3$, $\bm{u}_9=3$. This gives
\[ a_5 \cdots a_{16} = 111\, 111\, 12\, 21\, 3\, 3.\]
Hence, $c(5)=c(6)=c(7)=4$, $c(8)=c(9)=c(10)=5$, $c(11)=c(12)=6$, $c(13)=c(14)=7$, $c(15)=8$, $c(16)=9$.

\medskip
\noindent \underline{Depth 4:} Starting with condition (P1), we add 24 words from $G(4) = \{ 1111, 112, 121, 13, 211, 22, 31, 4\}$. The corresponding weights are
\[ \begin{array}{llll}
W(1111) = \frac18, & W(112) = \frac18, & W(121) = \frac1{12}, & W(13) = \frac16,\\
W(211) = \frac1{12}, & W(22) = \frac1{12}, & W(31) = \frac1{12}, & W(4) = \frac14.
\end{array}\]
Although (P2) again leaves some room for variation, we choose to add the words 1111 and 112 three times each, the words 121, 211, 22 and 31 twice each, the word 13 four times and the word 4 six times. We now also have to consider (P3). The words 1111, 112, 121 and 13 have grandparent 11 and the words 211, 22, 31 and 4 have grandparent 2. We divide the 24 words we will add up into twelve groups of two words each. In each of these groups, one word has to come from the set $\{ 1111, 112, 121, 13\}$ and one words has to come from the set $\{211, 22, 31,4\}$. We can take for example
\[ \begin{array}{cc|cc|cc|cc|cc|cc}
\bm{u}_{10} & \bm{u}_{11} & \bm{u}_{12} & \bm{u}_{13} & \bm{u}_{14} & \bm{u}_{14} & \bm{u}_{16} & \bm{u}_{17} & \bm{u}_{18} &
\bm{u}_{19} & \bm{u}_{20} & \bm{u}_{21}\\
1111 & 211 & 1111 & 211 & 1111 & 22 & 112 & 22 & 112 & 31 & 112 & 31\\
\\
\bm{u}_{22} & \bm{u}_{23} & \bm{u}_{24} &
\bm{u}_{25} & \bm{u}_{26} & \bm{u}_{27} &
\bm{u}_{28} & \bm{u}_{29} & \bm{u}_{30} &
\bm{u}_{31} & \bm{u}_{32} & \bm{u}_{33}\\
121 & 4 & 121 & 4 & 13 & 4 & 13 & 4 & 13 & 4 & 13 & 4,
\end{array}\]
which so far yields
\[ \begin{split}
A =\ & 1 \, 11\, 2 \, 111\, 111\, 12\, 21\, 3\, 3 \, 1111\, 211\, 1111 \, 211\, 1111\, 22\, 112\, 22\, 112\, 31\, 112 \, 31\, 121\, 4\, 121\, 4\, 13\\ 
& 4\, 13\, 4\, 13\, 4\, 13\, 4 \cdots. 
\end{split}\]

Observe that this construction gives sequences that differ from the ones in \cite[Section 5.3]{MM16} in that it uses fewer repetitions of words and higher digits occur earlier.
\end{ex}

\begin{ex}\label{voorbeeld2}
We now let $L= (\frac{1}{2^d})_{d \in \mathbb{N}},$ as in \cite[Proposition 4.6 (iii)]{DLR20}. Observe that all labels $p_n, q_n$ of the $L$-tree are equal to $\frac{1}{2}$. This leads to the following construction.

\medskip

\noindent \underline{Depth 1 and 2:} The labels of these depths are equal to those in the previous example, so we can choose again $a_1=a_2=a_3=1, a_4=2$ and $c(1)=1,c(2)=c(3)=2, c(4)=3$.

\medskip
\noindent \underline{Depth 3:} We again have to add 6 words from the set $G(3) = \{ 111, 12, 21, 3\}$. They now all have weights $\frac14$, which implies (for small enough $\varepsilon$) that we add all words at least once and two words of choice twice. We choose to add $111$ and $12$ twice. As we again still do not have to take (P3) into account, we can choose e.g.~$\bm{u}_4 = 111$, $\bm{u}_5 = 111$, $\bm{u}_6=12$, $\bm{u}_7 = 12$, $\bm{u}_8 = 21$, $\bm{u}_9=3$. This gives
\[ a_5 \cdots a_{17} = 111\, 111\, 12\, 12\, 21\, 3.\]
Hence, $c(5)=c(6)=c(7)=4$, $c(8)=c(9)=c(10)=5$, $c(11)=c(12)=6$, $c(13)=c(14)=7$, $c(15)=c(16)=8$, $c(17)=9$.

\medskip
\noindent \underline{Depth 4:} To conclude this example, we add 24 words from $G(4) = \{ 1111, 112, 121, 13, 211, 22, 31, 4\}$. All weights are $\frac18$ so we add every word 3 times. Taking (P3) into account, we alternate between the first four words $\{ 1111, 112, 121, 13\}$ and the last four words $\{211, 22, 31,4\}$. An example is
\[ \begin{array}{cc|cc|cc|cc|cc|cc}
\bm{u}_{10} & \bm{u}_{11} & \bm{u}_{12} & \bm{u}_{13} & \bm{u}_{14} & \bm{u}_{14} & \bm{u}_{16} & \bm{u}_{17} & \bm{u}_{18} &
\bm{u}_{19} & \bm{u}_{20} & \bm{u}_{21}\\
1111 & 211 & 1111 & 211 & 1111 & 211 & 112 & 22 & 112 & 22 & 112 & 22\\
\\
\bm{u}_{22} & \bm{u}_{23} & \bm{u}_{24} &
\bm{u}_{25} & \bm{u}_{26} & \bm{u}_{27} &
\bm{u}_{28} & \bm{u}_{29} & \bm{u}_{30} &
\bm{u}_{31} & \bm{u}_{32} & \bm{u}_{33}\\
121 & 31 & 121 & 31 & 121 & 31 & 13 & 4 & 13 & 4 & 13 & 4,
\end{array}\]
which so far yields
\[ \begin{split}
A =\ & 1 \, 11\, 2 \, 111\, 111\, 12\, 12\, 21\, 3 \, 1111\, 211\, 1111 \, 211\, 1111\, 211\, 112\, 22\, 112\, 22\, 112 \, 22\, 121\, 31\, 121\, 31\, 121\, 31 \\ 
& 13\, 4\, 13\, 4\, 13\, 4 \cdots. 
\end{split}\]

Note that regardless of our choices this procedure will always yield different sequences compared to the sequence $\mathcal{K}$ constructed in \cite{DLR20}, where higher digits occur much earlier.

\end{ex}

\section{Normal numbers}\label{s:GLS}

In this section we project the $L$-normal sequences obtained in the previous section to $[0,1]^n$ for some $n \in \mathbb N$ to obtain normal numbers in certain number systems.

\subsection{GLS expansions}\label{ss:gls}
GLS expansions are number representations of real numbers in $[0,1]$. We will introduce GLS expansions through the algorithm that produces them. Let $\mathcal I = \{ (\ell_d, r_d] \, : \, d \in \mathbb N\}$ be a countable collection of pairwise disjoint subintervals of $[0,1]$, set $L_d = r_d - \ell_d$, $d \in \mathbb N$, for the lengths of these intervals and assume that $\mathcal I$ satisfies $0< L_{d+1} \le L_d < 1$ for each $d \in \mathbb N$ and $\sum_{d \in \mathbb N} L_d =1$. In other words, $\mathcal I$ gives an interval partition of $[0,1]$ and the intervals are ordered in size. Let $(\varepsilon_d)_{d \in \mathbb N} \in \{0,1\}^\mathbb N$. Then the {\em GLS transformation} associated to $\mathcal I$ and $(\varepsilon_d)_{d \in \mathbb N}$ is the map  $T:[0,1] \to [0,1]$ given by
\begin{equation}\label{q:glstrf} T(x) = \begin{cases}
 \frac{(-1)^{\varepsilon_d}(x-\ell_d) + \varepsilon_d L_d}{L_d}, & \text{if } x \in (\ell_d, r_d], \, d \in \mathbb N,\\
0, & \text{if } x \not \in \bigcup_{d \ge 1} (\ell_d, r_d],
\end{cases}\end{equation}
see Figure~\ref{f:gls}. For $x \in [0,1]$ for which $T^i(x) \in \bigcup_{d \in \mathbb N} (\ell_d, r_d]$ for each $i \ge 0$ we can define two sequences by setting for each $i \in \mathbb N$ the $i$-th {\em digit} and {\em sign}, respectively, by $a_i =a_i(x) = d$ and $s_i=s_i(x) = \varepsilon_d$ if $T^{i-1}(x) \in (\ell_d, r_d]$, $d \in \mathbb N$. Then iterations of $T$ give a GLS expansion of the form
\[ x = \sum_{n \ge 1} (-1)^{\sum_{i=1}^{n-1}s_i} (\ell_{a_n}+s_nL_{a_n}) \prod_{i=1}^{n-1}L_{a_i},\]
where we set $(-1)^{\sum_{i=1}^0s_i}=1$ and $\prod_{i=1}^0L_{a_i}=1$. (Note that there are at most countably many points $x \in [0,1]$ that do not satisfy $T^i(x) \in \bigcup_{d \in \mathbb N} (\ell_d, r_d]$ for each $i \ge 0$.) The L\"uroth expansions and the L\"uroth transformation $T_L$ are recovered by setting $r_1=1$ and $r_{n+1}=\ell_n = \frac1{n+1}$ and $\varepsilon_n=0$ for $n \in \mathbb N$.

\medskip
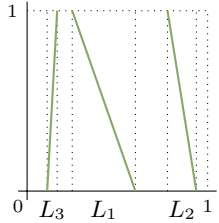
\begin{figure}[h]
\begin{tikzpicture}[scale=2.4]
\draw(-.05,0)node[below]{\scriptsize 0}--(15/108,0)node[below]{\small $L_3$}--(17/40,0)node[below]{\small $L_1$}--(247/288,0)node[below]{\small $L_2$}--(1,0)node[below]{\scriptsize 1}--(1.05,0)(0,-.05)--(0,1)node[left]{\scriptsize 1}--(0,1.05);
\draw[dotted](1/9,0)--(1/9,1)(1/6,0)--(1/6,1)(1/4,0)--(1/4,1)(3/5,0)--(3/5,1)(7/9,0)--(7/9,1)(15/16,0)--(15/16,1)(0,1)--(1,1)--(1,0);
\draw[thick, asparagus](1/9,0)--(1/6,1)(1/4,1)--(3/5,0)(7/9,1)--(15/16,0);
\end{tikzpicture}
\caption{The graph of a GLS transformation restricted to $(\ell_1, r_1] \cup (\ell_2,r_2] \cup (\ell_3,r_3]$ with $\varepsilon_1=\varepsilon_2=1$ and $\varepsilon_3=0$.}
\label{f:gls}
\end{figure}

\medskip
Since $a_n(x) = d$ if $T^{n-1}(x) \in (\ell_d, r_d]$, we expect the digit $d$ to occur with frequency $L_d$. In fact, it is a consequence of the Birkhoff ergodic theorem, see \cite{BBDK96}, that this is indeed the case for Lebesgue almost all $x \in [0,1]$. Therefore, we call an $x \in [0,1]$ {\em GLS normal} with respect to $\mathcal I$ and  $(\varepsilon_d)_{d \in \mathbb N}$ 
if for any $k \ge 1$ and any digits $\alpha_1, \ldots, \alpha_k \in \mathbb N$ it holds for the digit sequence $(a_i)_{i \in \mathbb N}$ of $x$ that
\[ \lim_{n \to \infty} \frac{\# \{ 1 \le i \le n \, : \, a_i = \alpha_1, \ldots, a_{i+k-1}=\alpha_k\}}{n} = \prod_{j=1}^n L_{\alpha_j},\]
or equivalently, if the digit sequence $(a_i)_{i \in \mathbb N}$ is an $(L_d)_{d \in \mathbb N}$-normal sequence. Note that the signs $\varepsilon_d$ have no effect on the length of the interval $(\ell_d,r_d]$ and thus also no effect on the value of $L_d$. Therefore, if we let $A = (a_i)_{i \in \mathbb N}$ be any $(L,K_1,K_2)$-tree sequence and set the sequence of signs $(s_i)_{i \in \mathbb N}$ to be the one corresponding to $(\varepsilon_d)_{d \in \mathbb N}$, so $s_i = \varepsilon_{a_i}$ for each $i \in \mathbb N$, then the number
\[ x =  \sum_{n \ge 1} (-1)^{\sum_{i=1}^{n-1}s_i} (\ell_{a_n}+s_nL_{a_n})  \prod_{i=1}^{n-1}L_{a_i}\]
is GLS normal.


\begin{ex}
We calculate the GLS normal numbers corresponding to Example~\ref{voorbeeld1} and Example~\ref{voorbeeld2}. For simplicity, for both examples we take $\varepsilon_d=0$ for all $d \in \mathbb{N},$ and let $\mathcal{I}$ be such that the intervals are ordered from right to left in size.

\medskip
For Example~\ref{voorbeeld1} this gives $\mathcal{I} = \{(\frac1{d+1}, \frac1{d}] \, :\,  d \in \mathbb{N}\}$, so that the projection of the sequence $A = 1112111111122133\cdots $ from Example~\ref{voorbeeld1} will yield a L\"uroth normal number. We obtain $\ell_1 = \frac12$, $\ell_2 = \frac13$, $L_1 = \frac12$ and $L_2=\frac16$. Therefore, the L\"uroth normal number given by $A$ is
\begin{equation*}
        \begin{split}
            x &= \ell_1  + L_1\ell_1 + L_1^2 \ell_1 + L_1^3 \ell_2 + L_1^3L_2 \ell_1 + \cdots\\
            &= \frac{1}{2} + \frac{1}{2^2} + \frac{1}{2^3} + \frac{1}{2^3\cdot 3} + \frac1{2^3 \cdot 6 \cdot 2} + \dots = 0.9374 \dots ,\\
        \end{split}
    \end{equation*}
where the last string of digits represents the decimal expansion of $x$.

\medskip
For Example~\ref{voorbeeld2} we obtain  $\mathcal{I} = \{ (\frac{1}{2^d}, \frac{1}{2^{d-1}}] : d \in \mathbb{N}\}$. We now have $\ell_1 = \frac12$, $\ell_2 = \frac14$, $L_1 = \frac12$ and $L_2=\frac14$, which for the sequence $A=11121111111212213 \cdots$ gives 
\begin{equation*}
        \begin{split}
            x &= \ell_1  + L_1\ell_1 + L_1^2 \ell_1 + L_1^3 \ell_2 + L_1^3L_2 \ell_1 + \cdots\\
            &= \frac{1}{2} + \frac{1}{2^2} + \frac{1}{2^3} + \frac{1}{2^3\cdot 4} + \frac1{2^3 \cdot 4 \cdot 2} + \dots  = 0.9373 \dots .\\
        \end{split}
    \end{equation*}
\end{ex}
\subsection{Multidimensional GLS expansions}
We can extend the number system from the previous section to higher dimensions. Fix some $N \in \mathbb N$ and for each $1 \le k \le N$ let $\mathcal I^{k} = \{ (\ell_d^{(k)}, r_d^{(k)}] \, : \, d \in \mathbb N\}$ and $(\varepsilon_d^{(k)})_{d \in \mathbb N}$ be as in Section~\ref{ss:gls}. For each $1 \le k \le N$ let $T_k$ be the GLS transformation for $\mathcal I^{k}$ and $(\varepsilon_d^{(k)})_{d \in \mathbb N}$ as given in \eqref{q:glstrf}. We can define the map $\hat T: [0,1]^N \to [0,1]^N$ by setting
\[ \hat T (\bm{x})  = \hat T(x_1, \ldots, x_N) = (T_k(x_k))_{1 \le k \le N}.\]
Then applications of $\hat T$ simultaneously produce GLS expansions for the coordinates $x_k$ by iteration in the sense that for points $\bm{x} = (x_1, \ldots, x_N)$ such that $T^n_k(x_k) \in \bigcup_{d \in \mathbb N} (\ell_d^{(k)},r_d^{(k)}]$ for each $1 \le k \le N$ iterations of $\hat T$ assign to each $x_k$ a digit sequence $(a_i^{(k)})_{k \ge 1}$ and a sign sequence $(s_i^{(k)})_{k \ge 1}$ so that the point $\bm{x}$ is given by
\[ \bm{x} = \left( \sum_{n \ge 1} (-1)^{\sum_{i=1}^{n-1}s_i^{(k)}} (\ell_{a_n^{(k)}}+s_n^{(k)}L_{a_n^{(k)}})  \prod_{i=1}^{n-1}L_{a_i^{(k)}} \right)_{1 \le k \le N}.\]
Therefore, $\bm{x}$ is represented by the digit sequence $(a_i^{(1)}, \ldots, a_i^{(N)})_{i \in \mathbb N} \in (\mathbb N^N)^\mathbb N$ and the sign sequence $(s_i^{(1)}, \ldots, s_i^{(N)})_{i \ge 1} \in (\{0,1\}^N)^\mathbb N$. Assume there exists a bijection $f: \mathbb N^N \to \mathbb N$ that satisfies the following: if for $\bm{i}= (i_1, \ldots, i_N) , \bm{j} = (j_1, \ldots, j_N) \in \mathbb N^N$ it holds that
\[ \prod_{k=1}^N L^{(k)}_{i_k} \ge \prod_{k=1}^N L^{(k)}_{j_k},\]
then $f(\bm{i}) \le f(\bm{j})$. Then let $L = (L_d)_{d \in \mathbb N}$ be the positive probability sequence with $L_d = \prod_{k=1}^N L^{(k)}_{d_k}$, where $(d_1, \ldots, d_N) = f^{-1}(d)$.

\medskip
The map $\hat T$ is invariant and ergodic with respect to the $N$-dimensional Lebesgue measure on $[0,1]^N$. Therefore, the Birkhoff ergodic theorem implies that for each $\alpha_1, \ldots, \alpha_k \in \mathbb N$ and Lebesgue almost all $\bm{x} \in [0,1]^N$ we have 
\begin{equation}\label{q:multidimgls}
 \lim_{n \to \infty} \frac{\# \{ 1 \le i \le n \, : \, f(a_i^{(1)}, \ldots, a_i^{(N)}) = \alpha_1, \ldots, f(a_{i+k-1}^{(1)}, \ldots, a_{i+k-1}^{(N)}) =\alpha_k\}}{n} = \prod_{j=1}^k L_{\alpha_j}.
 \end{equation}
Hence, it would be natural to call an $\bm{x} \in [0,1]^N$ normal in this multidimensional GLS number system if \eqref{q:multidimgls} holds for all $\alpha_1, \ldots, \alpha_k \in \mathbb N$, $k \ge 1$. Let $A = (a_i)_{i \in \mathbb N}$ be any $(L,K_1,K_2)$-tree sequence. For each $i \in \mathbb N$ let $(a_i^{(1)}, \ldots, a_i^{(N)}) = f^{-1}(a_i)$ and let the sign sequence $(s_i^{(1)}, \ldots, s_i^{(N)})_{i \in \mathbb N}$ be given by $s_i^{(k)} = \varepsilon_{a_i^{(k)}}^{(k)}$, $1 \le k \le N$, $i \ge 1$. Then the point
\[ \bm{x} = \left( \sum_{n \ge 1} (-1)^{\sum_{i=1}^{n-1}s_i^{(k)}} (\ell_{a_n^{(k)}}+s_n^{(k)}L_{a_n^{(k)}})  \prod_{i=1}^{n-1}L_{a_i^{(k)}} \right)_{1 \le k \le N}\]
is normal according to this definition.

\def\cprime{$'$}


\begin{thebibliography}{BBDK96}

\bibitem[AP15]{AP15}
M.~Aehle and M.~Paulsen.
\newblock Construction of normal numbers with respect to {G}eneralized
  {L}\"{u}roth series from equidistributed sequences.
\newblock arXiv:1509.08345, 2015.

\bibitem[BBDK96]{BBDK96}
J.~Barrionuevo, R.M. Burton, K.~Dajani, and C.~Kraaikamp.
\newblock Ergodic properties of generalized {L}\"{u}roth series.
\newblock {\em Acta Arith.}, 74(4):311--327, 1996.

\bibitem[Bok09]{Bok09}
A.~Boks.
\newblock Towards the construction of a normal {L}\"{u}roth series.
\newblock Bachelor thesis at the TU Delft, 2009.

\bibitem[Bor09]{Bor09}
E.~Borel.
\newblock Les probabilit\'{e}s d\'{e}nombrables et leurs applications
  arithm\'{e}tiques.
\newblock {\em Palermo Rend.}, 27:247--271, 1909.

\bibitem[Bug12]{Bug12}
Y.~Bugeaud.
\newblock {\em Distribution modulo one and {D}iophantine approximation}, volume
  193 of {\em Cambridge Tracts in Mathematics}.
\newblock Cambridge University Press, Cambridge, 2012.

\bibitem[Cha33]{Cha33}
D.G. Champernowne.
\newblock The {C}onstruction of {D}ecimals {N}ormal in the {S}cale of {T}en.
\newblock {\em J. London Math. Soc.}, 8(4):254--260, 1933.

\bibitem[DdLR20]{DLR20}
K.~Dajani, M.R. de~Lepper, and E.A. Robinson, Jr.
\newblock Introducing {M}inkowski normality.
\newblock {\em J. Number Theory}, 211:455--476, 2020.

\bibitem[KN74]{KN74}
L.~Kuipers and H.~Niederreiter.
\newblock {\em Uniform distribution of sequences}.
\newblock Pure and Applied Mathematics. Wiley-Interscience [John Wiley \&
  Sons], New York-London-Sydney, 1974.

\bibitem[L{\"u}r83]{Lur83}
J.~L{\"u}roth.
\newblock Ueber eine eindeutige {E}ntwickelung von {Z}ahlen in eine unendliche
  {R}eihe.
\newblock {\em Math. Ann.}, 21(3):411--423, 1883.

\bibitem[MM16]{MM16}
M.G. Madritsch and B.~Mance.
\newblock Construction of {$\mu$}-normal sequences.
\newblock {\em Monatsh. Math.}, 179(2):259--280, 2016.

\bibitem[Van14]{Van14}
J.~Vandehey.
\newblock A simpler normal number construction for simple {L}\"{u}roth series.
\newblock {\em J. Integer Seq.}, 17(6):Article 14.6.1, 18, 2014.

\end{thebibliography}
\end{document}